\documentclass[a4paper,11pt]{article}
\usepackage{amsthm,amsmath,amssymb,amsfonts,mathrsfs,hyperref,microtype}
\usepackage{graphicx,color}

\newtheorem{defn}{Definition}[section]
\newtheorem{thm}{Theorem}[section]
\newtheorem{prop}{Proposition}[section]
\newtheorem{cor}{Corollary}[section]
\newtheorem{rmk}{Remark}[section]

\newtheorem{Assumption}{Assumption}[section]
\newtheorem{exm}{Example}[section]

\newcommand{\ud}{\mathrm{d}}

\def\N{{\rm I\kern-0.16em N}}
\def\R{{\rm I\kern-0.16em R}}
\def\E{{\rm I\kern-0.16em E}}
\def\Pro{{\rm I\kern-0.16em P}}
\def\F{{\rm I\kern-0.16em F}}
\def\B{{\rm I\kern-0.16em B}}
\def\C{{\rm I\kern-0.46em C}}
\def\G{{\rm I\kern-0.50em G}}

\numberwithin{equation}{section}
\font\eka=cmex10
\usepackage{ae}
\def\ind{\mathrel{\hbox{\rlap{%
\hbox to 7.5pt{\hrulefill}}\raise6.6pt\hbox{\eka\char'167}}}}
\parindent0pt

\begin{document}
\title{\textbf{A general approach to small deviation via concentration of measures}}
\date{\today}




\renewcommand{\thefootnote}{\fnsymbol{footnote}}

\author{Ehsan Azmoodeh\footnotemark[1] \, and \, Lauri Viitasaari\footnotemark[2],\footnotemark[3]}

\footnotetext[2]{Department of Mathematics and System Analysis, Aalto University School of Science, Helsinki P.O. Box 11100, FIN-00076 Aalto,  Finland, {\tt lauri.viitasaari@aalto.fi}.}

\footnotetext[3]{Department of Mathematics, Saarland University, Post-fach 151150, D-66041 Saarbr\"ucken, Germany.}

\footnotetext[1]{Mathematics Research Unit, Luxembourg University, P.O. Box L-1359, Luxembourg, {\tt ehsan.azmoodeh@uni.lu}. Azmoodeh is supported by research project F1R-MTH-PUL-12PAMP.}
\maketitle

\abstract{We provide a general approach to obtain upper bounds for small deviations $ \Pro(\Vert y
\Vert \le \epsilon)$ in different norms, namely the supremum and $\beta$- H\"older norms. The large class of processes
$y$ under consideration takes the form
$y_t= X_t + \int_0^t a_s \ud s$, where $X$ and $a$ are two possibly dependent stochastic processes. Our approach provides an upper bound for small deviations whenever upper bounds for 
the \textit{concentration of measures} of $L^p$- norm of random vectors built from increments of the process $X$ and \textit{large deviation} estimates for the process $a$ are available. Using our method, among 
others, we obtain the optimal rates of small deviations in supremum and $\beta$- H\"older norms for fractional Brownian motion with Hurst parameter $H\le\ \frac{1}{2}$. As an application, we discuss the usefulness of our upper bounds for small 
deviations in pathwise stochastic integral representation of random variables motivated by
the hedging problem in mathematical finance.

\

\noindent {\bf Keywords}: Small deviations (small ball probabilities); Concentration of measures; Large deviation; Hoeffding's inequality; Sums of i.i.d. random variables; Anderson's inequality; 
Fractional Gaussian processes; Fractional Brownian motion; Spectral density; Stochastics integral representation; Hedging of contingent claims.

\noindent{\bf MSC 2010: 60G15, 60G22, 60G50, 60F10, 91G99. }

\tableofcontents

\section{Introduction}

\subsection{Overview and motivation}
General small deviation problems have received a lot of attention recently due to their deep connections to various mathematical
topics such as operator theory, quantization, almost sure limit theorems, etc., see 
surveys \cite{l-s, Lifshits} and references therein. More recently, a link was 
established between small deviations and problems in mathematical statistics, namely functional analysis of data
and nonparametric Bayes estimates \cite{Fe-Vi, vaart-zanten, Au-Ib-Li-Za}. 

Let $y$ be a stochastic process (sequence) with sample paths lying in some
functional normed space with the norm denoted by $\Vert \, \Vert$. The general small deviation problems (or small ball probabilities)
study the asymptotic behavior of the probability $\Pro (\Vert y \Vert \le \epsilon)$ as $\epsilon \to 0$,
whereas the large deviation investigates the asymptotic behavior of the probability $\Pro(\Vert y \Vert \ge x)$ as $x \to
\infty$.\\

The small deviation problem has a long history and is considered a difficult problem in general. The main obstacles to develop a
unified approach to study small deviation problem are the adherence to the underlying stochastic process $y$ and to
the norm $\Vert \, \Vert$ under which the small ball probability is considered. Therefore, in most of the literature the small
deviation problem is usually studied for a particular class of processes and under a particular norm. It can be said that
the one of the first successful attempt to develop a general approach is due to  W. Stolz \cite{stolz1,stolz2} using the \textit{Schauder basis}. His approach covers
almost all Gaussian processes having similar covariance functions to that of \textit{fractional Brownian motion}. Another special
effort in this direction is made in \cite{lift-simon} by Lifshits \& Simon in which contains some non-Gaussian processes, in particular \textit{fractional stable} processes. Developing a general strategy to 
deal with the small deviation problem for Gaussian processes is culminated with giving a precise link, discovered by Kuelbs and Li
\cite{ku-li} and completed by Li and Linde \cite{li-li}, to the metric entropy of the unit ball of the reproducing kernel
Hilbert space generated by Gaussian process. In the non-Gaussian case, similar links are built in \cite{stable1, stable2, stable3} for \textit{symmetric} $\alpha$-\textit{stable} processes. Apparently, 
it remains a great challenge to find some principle describing small
deviations for general classes of processes and norms, rather than investigate the problem case by case.\\

\subsection{Main results and advantages}

The aim of this paper is to provide a general methodology that can be applied in both discrete and continuous setting, to give upper bounds (in fact exponential upper bounds in many interesting examples) 
for small deviations. Consider stochastic processes of the form 
\begin{equation}\label{proc-form}
y_t = X_t +\int_{0}^{t} a_s \ud s, \qquad t \in [0,T],
\end{equation}
where $X$ and $a$ are two stochastic processes, possibly dependent, such that $X_0=0$ and the Lebesgue integral in $(\ref{proc-form})$ is well defined almost surely. Let $N \in \N,$ and $p, \delta >0$. For a 
given partition $\{t_k\}_{k=0}^{N}$ of the interval $[0,T]$ such that $t_k - t_{k-1} = \delta$, we set $$\mathbf{\vert X \vert}_p
=\left[\sum_{k=1}^{N}\left|X_{t_{k}}-X_{t_{k-1}}\right|^p\right]^{\frac{1}{p}}.$$ Our main finding states that for carefully
chosen parameters $N, \delta, p$ and sufficiently small $\epsilon>0$,  we have the upper bound 

\begin{equation}\label{main-inequality}
\Pro(\Vert y \Vert_\infty \le \epsilon) \le \Pro \left(\big|\mathbf{\vert X \vert }_p-I \big|\geq c_\epsilon \right) + \Pro
\left( \Vert a\Vert_\infty \geq d_\epsilon \right),
\end{equation}

where $\Vert \, \Vert_\infty$ stands for the supremum norm, $I$ can be taken as a \textit{median} of the random variable $\mathbf{\vert X \vert}_p$, and the constants $c_\epsilon, d_\epsilon$ are such that 
$c_\epsilon \to 0$ and $d_\epsilon \to \infty$ as $\epsilon \to 0$. Hence, the probabilities appearing in the right hand side of the $(\ref{main-inequality})$ connects small ball probabilities to the
concentration of measures and to the theory of large deviation; two topics that are of great interest and have been developed extensively. As a result, the exponential upper bounds for small ball probabilities are derived 
as soon as there exist exponential upper bounds for the corresponding concentration of measure probability and the tail probability in $(\ref{main-inequality})$. It is worth to mention that this is the case 
in many interesting situations as is shown in Section \ref{sec:exam}. A notable example is the case when the process $X$ is Gaussian with some regularity assumptions on the \textit{incremental variance} function and the process $a$ is 
bounded almost surely.\\

Our approach has several important advantages compared to the classical methods. Firstly, our method works for general processes and
we do not need to assume any demanding assumptions on the underlying process $y$. 
As the second advantage, in the literature (see \cite{m-r} and \cite[Section 4.2]{l-s} for example) the small ball probabilities for Gaussian processes are mostly restricted to the class
of \textit{stationary increments} which can be considerably extended with our approach. This is the topic of the Subsection \ref{gaussian}. Moreover, it is well-known that the
estimates for the small ball probabilities for Gaussian processes is deeply connected to the incremental variance of the
process. It is pointed out in Li and Shao \cite{l-s} (see also Lifshits \cite{Lifshits}) that to obtain upper bound for small
ball probability it is not sufficient to have lower bound for incremental variance in general. However, we will show that using our method, this is exactly
the key element to obtain the exponential upper bounds (see Theorem \ref{thm:gaussian}) in many cases.\\

\subsection{Plan}
The rest of the paper is organized as follows. In Section \ref{sec:main} we formulate and proof
our main theorems. The Section \ref{sec:exam} is devoted to examples where we study three particular cases; i) sums of independent random variables, ii) H\"older continuous processes with bounded H\"older constant, and iii) a wide class of Gaussian processes. In Section \ref{application} we consider the usefulness of our exponential upper bounds for
small deviations in stochastic integral representation of random variables.


\section{Main results: general approach}\label{sec:main}

In what follows, all random objects are defined on a complete probability space $(\Omega,\mathcal{F},\Pro)$ .

\subsection{Small deviation in the supremum norm}

We consider the stochastic processes of the form $(T>0)$

\begin{equation}\label{y-process}
 y_t = X_t + \int_0^t a_s \ud s \quad t \in [0,T].
\end{equation}

Here $X=\{X_t\}_{t \in [0,T]}$ with $X_0=0$ and $a =\{a_t\}_{t \in [0,T]}$ are any general stochastic processes such that the
Lebesgue integral is well-defined. Notice that we do not assume that the two processes $X$ and $a$ are independent, otherwise a standard application of the Anderson's 
inequality \cite[Theorem 2.13]{l-s} implies 
that an upper bound for small ball probability for the process $y$ reduces to obtain an upper bound for small ball probability of the process $X$. Therefore, the independence is not a graceful assumption for 
our purposes in this paper. Moreover, in general $X$ and $a$ are not assumed to be continuous. We define
the supremum norm on $[0,T]$ by $\Vert y\Vert_{\infty} = \sup_{t\in[0,T]}|y_t|$. For further use, we set $\mathbf{X}=(X_{t_0},
\cdots,X_{t_N})$ for a given sequence of time points $\{0=t_0<t_1<\ldots<t_N=T\}$ with
$t_{k+1}-t_k=\delta$. We consider different $L^p$-norms, and we set

\begin{equation*}
\mathbf{\vert X \vert}_p =\left[\sum_{k=1}^{N}\left|X_{t_{k}}-X_{t_{k-1}}\right|^p\right]^{\frac{1}{p}}.
\end{equation*}

For given $\epsilon>0$, we also define the following set:

\begin{equation*}
\mathcal{A}_p (\epsilon) = \left\{ (N,\delta,I) \in \mathbb{N} \times \R_+ \times \R_+ \big \vert  4N^{\frac{1}{p}}\leq I\epsilon^{-1} \text{ and } N\delta \leq T \right\},
\end{equation*}
where $T$ denotes the length of the time interval under consideration, and $\R_+ = (0,\infty)$. Hereafter, without ambiguity we will drop the dependency of the set $\mathcal{A}_p$ on the parameter $\epsilon$, 
and we write $\mathcal{A}_p(\epsilon) = \mathcal{A}_p$. The set $\mathcal{A}_p$ includes all required parameters which relates them in a way that are acceptable for our purposes. Note that the set 
$\mathcal{A}_p$ is never empty. \\

The following theorem explains our general approach how small ball probabilities can be related to concentration of measure phenomena of the process $X$ and large deviation of the process $a$.

\begin{thm}\label{general-thm}
Let the above notations and assumptions prevail. Then for any $\epsilon>0$ and for any interval $[0,T]$, we have
\begin{equation*}
\begin{split}
\Pro(\Vert y\Vert_{\infty}\leq \epsilon) & \leq \inf_{p>0}\inf_{(N,\delta,I)\in\mathcal{A}_p}\Big\{ \Pro \left(\big|
\mathbf{\vert X \vert }_p-I \big|\geq 2^{-2}I\right)\\
& \quad \hspace{0.2cm} +\Pro\left( \Vert a\Vert_{\infty} \geq
2^{-2}IN^{-\frac{1}{p}}\delta^{-1}\right)\Big\}.
\end{split}
\end{equation*}
\end{thm}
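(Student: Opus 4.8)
The plan is to reduce the theorem to a single deterministic event-inclusion, after which the stated bound follows immediately by subadditivity of $\Pro$ together with the fact that the left-hand side $\Pro(\Vert y\Vert_{\infty}\le\epsilon)$ does not depend on $p$, $N$, $\delta$, or $I$, so the double infimum is taken for free. Concretely, I would fix $p>0$ and an admissible triple $(N,\delta,I)\in\mathcal{A}_p$ and establish the inclusion
\[
\{\Vert y\Vert_{\infty}\le\epsilon\}\ \subseteq\ \Big\{\big|\mathbf{\vert X \vert}_p-I\big|\ge 2^{-2}I\Big\}\ \cup\ \Big\{\Vert a\Vert_{\infty}\ge 2^{-2}IN^{-\frac{1}{p}}\delta^{-1}\Big\},
\]
and then apply $\Pro$ to both sides and pass to the infimum over $(N,\delta,I)$ and over $p$.

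The engine of the argument is a pathwise estimate on the increments of $X$ across the partition. From $y_{t_k}-y_{t_{k-1}}=(X_{t_k}-X_{t_{k-1}})+\int_{t_{k-1}}^{t_k}a_s\,\ud s$ and the triangle inequality,
\[
\left|X_{t_k}-X_{t_{k-1}}\right|\ \le\ |y_{t_k}|+|y_{t_{k-1}}|+\int_{t_{k-1}}^{t_k}|a_s|\,\ud s .
\]
On the event $\{\Vert y\Vert_{\infty}\le\epsilon\}$ the first two terms are each at most $\epsilon$, and the integral is at most $\delta\Vert a\Vert_{\infty}$ since $t_k-t_{k-1}=\delta$; the constraint $N\delta\le T$ in the definition of $\mathcal{A}_p$ is precisely what guarantees that the partition lies inside $[0,T]$, so that the supremum norm of $y$ over $[0,T]$ genuinely controls every increment used. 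Raising to the $p$-th power, summing the $N$ identical bounds, and taking the $p$-th root gives, on $\{\Vert y\Vert_{\infty}\le\epsilon\}$,
\[
\mathbf{\vert X \vert}_p\ \le\ N^{\frac{1}{p}}\big(2\epsilon+\delta\Vert a\Vert_{\infty}\big).
\]

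It remains to feed this into a dichotomy according to the size of $\Vert a\Vert_{\infty}$. If $\Vert a\Vert_{\infty}\ge 2^{-2}IN^{-1/p}\delta^{-1}$, the second event in the inclusion already holds. Otherwise $N^{1/p}\delta\Vert a\Vert_{\infty}< 2^{-2}I=I/4$, while the remaining constraint $4N^{1/p}\le I\epsilon^{-1}$ of $\mathcal{A}_p$ yields $2N^{1/p}\epsilon\le I/2$; adding these gives $\mathbf{\vert X \vert}_p< 3I/4$, hence $I-\mathbf{\vert X \vert}_p> I/4$ and therefore $\big|\mathbf{\vert X \vert}_p-I\big|\ge 2^{-2}I$, placing us in the concentration event. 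Thus on $\{\Vert y\Vert_{\infty}\le\epsilon\}$ at least one of the two events on the right necessarily occurs, which is exactly the claimed inclusion.

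Conceptually the proof is short, and there is no hard analytic estimate here: the single real point of care is the bookkeeping of the numerical constants so that the two constraints encoded in $\mathcal{A}_p$ interlock with the two thresholds $2^{-2}I$. This is precisely why the factor $4$ appears in the definition of $\mathcal{A}_p$ and why the thresholds are taken to be $I/4$—they are calibrated so that the two contributions $2N^{1/p}\epsilon$ and $N^{1/p}\delta\Vert a\Vert_{\infty}$ together fall strictly below $3I/4$. I would also note that the identity of $I$ as a \emph{median} plays no role in this deterministic step; it is an arbitrary admissible parameter here, and its interpretation matters only afterwards, when concentration-of-measure inequalities are used to bound the first probability and large-deviation estimates to bound the second.
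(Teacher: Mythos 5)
Your proposal is correct and follows essentially the same route as the paper: the same increment bound $|X_{t_k}-X_{t_{k-1}}|\le 2\Vert y\Vert_{\infty}+\delta\Vert a\Vert_{\infty}$, the same aggregation to $\mathbf{\vert X \vert}_p\le 2N^{1/p}\Vert y\Vert_{\infty}+N^{1/p}\delta\Vert a\Vert_{\infty}$, and the same calibration of the constraints in $\mathcal{A}_p$ against the thresholds $2^{-2}I$. Your explicit dichotomy on the size of $\Vert a\Vert_{\infty}$ is just an unpacked form of the paper's final step, which normalizes by $I$, notes that on $\{\Vert y\Vert_{\infty}\le\epsilon\}$ the two residual terms sum to at least $\tfrac12$, and applies $\Pro(Z_1+Z_2>a)\le\Pro(Z_1>\tfrac{a}{2})+\Pro(Z_2>\tfrac{a}{2})$.
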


\begin{rmk}{\rm
The given upper bound may look rather odd at first sight and it is not clear immediately how the upper bound is related to concentration of measure and large deviation for the process $a$. We will explain 
the relation clearly in Subsection \ref{subsec:concentration} and it will come particularly clear in Section \ref{sec:exam} where we consider examples.
}
\end{rmk}

\begin{proof}[Proof of Theorem \ref{general-thm}]
For simplicity, we choose $p=2$. However, the general case follows by substituting
$\mathbf{\vert X\vert}_2$ with  $\mathbf{\vert X\vert}_p$ and $\sqrt{N}$ with $N^{\frac{1}{p}}$. For any $t,s\in[0,T]$, we have

\begin{equation*}
 |X_t-X_s| \leq 2\|y\|_{\infty}+|t-s|  \Vert a \Vert_{\infty}.
\end{equation*}
Let now the vector $(N, \delta,I)\in \mathcal{A}_2$ be fixed. Consider time points $\{t_k\}_{k=0}^{N}$ such that $t_0=0$,
$t_N=T$ and $t_{k}-t_{k-1}=\delta$. For such partition, we get

\begin{equation*}
\left| X_{t_{k}}-X_{t_{k-1}}\right|\leq 2 \|y\|_{\infty}+\delta \Vert a\Vert_{\infty}.
\end{equation*}

By taking squares on both sides and summing up, we obtain

\begin{equation*}
\sum_{k=1,\ldots, N}\left|X_{t_{k}}-X_{t_{k-1}}\right|^2\leq N \Big( 2 \|y\|_{\infty}+\delta\|a\|_{\infty}\Big)^2.
\end{equation*}

We take square roots to obtain that

\begin{equation}
\label{eq:aux}
\mathbf{\vert X \vert }_2 \leq 2\sqrt{N}\|y\|_{\infty}+\sqrt{N}\delta\|a\|_{\infty}.
\end{equation}
For the positive number $I$, the triangle inequality gives
$$
I \leq \mathbf{\vert X \vert }_2 + |\mathbf{\vert X \vert }_2-I|
$$
Hence multiplying both sides with $I^{-1}$ yields
$$
1 \leq \frac{\mathbf{\vert X \vert }_2}{I} + \frac{|\mathbf{\vert X \vert }_2-I|}{I}.
$$
Combining with (\ref{eq:aux}), we obtain for any $\epsilon>0$ that
\begin{equation*}
\begin{split}
1 & \leq  2\frac{\sqrt{N}}{I}\|y\|_{\infty}+\frac{\sqrt{N}\delta}{I}\|a\|_{\infty}+\frac{|\mathbf{\vert X \vert }_2-I|}{I}\\
&=2\frac{\sqrt{N}}{I}\|y\|_{\infty}+\frac{\sqrt{N}\delta}{I}\|a\|_{\infty}+\frac{|\mathbf{\vert X \vert }_2-I|}{I}.
\end{split}
\end{equation*}

Now on the set $\mathcal{A}_2$, we have $\frac{2\sqrt{N}}{I}\leq \frac{1}{2\epsilon}$. Hence
\begin{equation}
\label{eq:aux2}
 1 \leq \frac{\|y\|_{\infty}}{2\epsilon} + \frac{\sqrt{N}\delta}{I}\|a\|_{\infty}+ \frac{\big\vert \mathbf{\vert X\vert}_2-I\big\vert}{I}.
\end{equation}
Applying (\ref{eq:aux2}) on the set $\{\|y\|_{\infty} \le \epsilon \}$, we also obtain
$$
\frac{\sqrt{N}\delta}{I}\|a\|_{\infty}+\frac{|\mathbf{\vert X \vert }_2-I|}{I} \geq \frac12.
$$
It remains to note that for any positive random variables $Z_1$ and $Z_2$ and any number $a>0$, we have the inequality
$$
\Pro(Z_1 + Z_2 >a) \leq \Pro\left(Z_1 > \frac{a}{2}\right)+\Pro\left(Z_2 > \frac{a}{2}\right).
$$
Consequently, we get
\begin{equation*}
\begin{split}
&\Pro \left(\|y\|_{\infty} \le \epsilon\right)\\
 & \leq   \Pro\left(\frac{\big\vert \mathbf{\vert X\vert}_2-I\big\vert}{I} \geq 2^{-2}\right)+\Pro\left(\frac{\sqrt{N}\delta}{I}\|a\|_ {\infty} \geq 2^{-2}\right)\\
&= \Pro\left( \big\vert \mathbf{\vert X\vert}_2-I\big\vert \geq 2^{-2}I\right)+\Pro\left( \Vert a\Vert_{\infty} \geq 2^{-2}IN^{-\frac{1}{2}}\delta^{-1}\right).
\end{split}
\end{equation*}
Now this upper bound holds for any numbers $(N,\delta,I)\in\mathcal{A}_2$ while the left side is independent of these parameters. Hence, we obtain the result by taking infinitum.
\end{proof}
\begin{rmk}{\rm
Note that in the case $a=0$, the term $2^{-2}$ can be replaced with $2^{-1}$ on the probability $\Pro\left( \big\vert \mathbf{\vert X\vert}_2-I\big\vert \geq 2^{-2}I\right)$. However, this essentially affects only to the constants.
}
\end{rmk}
\begin{rmk}{\rm
For simplicity we chose uniform division of time points $\{t_k; \, k=1,\cdots,N\}$. However, by examining the above proof, it is clear that one can formulate the result for non-uniform partitions by 
replacing $\delta$ with $\delta=\max_{1 \le k \le N}\vert t_{k-1}-t_k \vert$, i.e. the mesh of the partition. In this case, one may take infimum with respect to partitions rather than parameters $N$ and $\delta$.
}
\end{rmk}

\subsection{Small deviation in other norms}
To demonstrate the power of our general methodology, we devote this section to small deviation in other norms, in particular
$L^1$- and $\beta$- H\"older norms for $\beta \in (0,1)$. We recall that for any measurable function $f:[0,T] \to \R$, the $L^1$ and
$\beta$- H\"older norms are defined as following:

$$
\Vert f\Vert_{L^1} = \int_0^T |f(u)|\ud u \quad \text{and} \quad \Vert f\Vert_\beta = \sup_{0\leq s\neq t\leq T}
\frac{|f(t)-f(s)|}{(t-s)^\beta}.
$$

For given $\epsilon>0$, we will consider the following sets:
\begin{equation*}
\mathcal{\widehat{A}}_p (\epsilon) = \left\{ (N,\delta,I) \in \N \times \R_+ \times \R_+  \big \vert
 \, 8N^{\frac{1}{p}}\leq I\epsilon^{-1} \text{ and } N\delta \leq T \right\},
\end{equation*}
and
\begin{equation*}
\mathcal{\widetilde{A}}_p (\epsilon) = \left\{ (N,\delta,I) \in \N \times \R_+ \times \R_+ \big \vert
 \, 2 \delta^\beta N^{\frac{1}{p}}\leq I\epsilon^{-1} \text{ and } N\delta \leq T \right\}.
\end{equation*}

The first result shows the possibility of replacing the supremum norm with $L^1$ norm for the process $a$ in the
second probability appearing in the upper bound in Theorem \ref{general-thm}.

\begin{thm}\label{general-thm2}
Assume that all the above notations and assumptions prevail. Then for any $\epsilon>0$ and for any interval $[0,T]$, we have
\begin{equation*}
\begin{split}
\Pro(\Vert y\Vert_{\infty}\leq \epsilon) &\leq \inf_{p\geq 1}\inf_{(N,\delta,I)\in\mathcal{\widehat{A}}_p}\Big\{\Pro
\left(\big| \mathbf{\vert X \vert }_p-I \big|\geq 2^{-2}I\right)\\
& \qquad \qquad +\Pro\left( \Vert a\Vert_{L^1} \geq 2^{-3}I\right)\Big\}.
\end{split}
\end{equation*}
\end{thm}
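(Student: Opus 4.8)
The plan is to follow the proof of Theorem \ref{general-thm} almost verbatim, the only---but decisive---change being the way the drift term is estimated so that it produces $\Vert a\Vert_{L^1}$ instead of $\Vert a\Vert_\infty$. First I would fix $p\ge 1$ and $(N,\delta,I)\in\mathcal{\widehat{A}}_p$, take a uniform partition $\{t_k\}_{k=0}^N$ of $[0,T]$ with $t_k-t_{k-1}=\delta$ and $N\delta\le T$, and start from the exact identity
$$
X_{t_k}-X_{t_{k-1}} = \big(y_{t_k}-y_{t_{k-1}}\big) - \int_{t_{k-1}}^{t_k} a_u \ud u .
$$
Thus the vector of $X$-increments is the difference of the increment vector of $y$ and that of the drift.

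Rather than squaring and summing as in the case $p=2$, I would apply Minkowski's inequality in $\ell^p$ (this is precisely where the hypothesis $p\ge 1$ is needed) to obtain
$$
\mathbf{\vert X \vert}_p \leq \Big(\sum_{k=1}^{N} \big|y_{t_k}-y_{t_{k-1}}\big|^p\Big)^{1/p} + \Big(\sum_{k=1}^{N} \Big|\int_{t_{k-1}}^{t_k} a_u \ud u \Big|^p\Big)^{1/p}.
$$
The first term is handled exactly as before: each increment of $y$ is at most $2\Vert y\Vert_\infty$, so this term is bounded by $2N^{1/p}\Vert y\Vert_\infty$. The genuinely new point---and what I expect to be the only real obstacle---is to bound the second term by $\Vert a\Vert_{L^1}$ \emph{without} an $N^{1/p}$ factor. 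Setting $b_k=\int_{t_{k-1}}^{t_k}|a_u|\ud u\ge 0$, I would use $\big|\int_{t_{k-1}}^{t_k}a_u\ud u\big|\le b_k$ together with the embedding $\Vert\cdot\Vert_{\ell^p}\le\Vert\cdot\Vert_{\ell^1}$, valid for $p\ge 1$, to get
$$
\Big(\sum_{k=1}^N b_k^p\Big)^{1/p} \leq \sum_{k=1}^N b_k = \Vert a\Vert_{L^1},
$$
the last equality because the subintervals tile $[0,T]$. This yields the clean bound $\mathbf{\vert X \vert}_p \leq 2N^{1/p}\Vert y\Vert_\infty + \Vert a\Vert_{L^1}$, in which the mesh-dependent factor multiplies only $\Vert y\Vert_\infty$.

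From here the reasoning is identical to Theorem \ref{general-thm}. Dividing the triangle inequality $I\le \mathbf{\vert X \vert}_p + |\mathbf{\vert X \vert}_p-I|$ by $I$ and inserting the last bound gives
$$
1 \leq \frac{2N^{1/p}}{I}\Vert y\Vert_{\infty} + \frac{\Vert a\Vert_{L^1}}{I} + \frac{\big\vert \mathbf{\vert X \vert}_p - I\big\vert}{I}.
$$
On $\mathcal{\widehat{A}}_p$ the constraint $8N^{1/p}\le I\epsilon^{-1}$ forces $2N^{1/p}/I\le 1/(4\epsilon)$, so on the event $\{\Vert y\Vert_\infty\le\epsilon\}$ the first summand is at most $\tfrac14$, leaving $\Vert a\Vert_{L^1}/I + \big\vert \mathbf{\vert X \vert}_p-I\big\vert/I \ge \tfrac34$. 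Since $2^{-2}+2^{-3}=\tfrac38<\tfrac34$, the two summands cannot both stay below their respective thresholds, so $\{\Vert y\Vert_\infty\le\epsilon\}$ is contained in $\big\{\big\vert \mathbf{\vert X \vert}_p-I\big\vert\ge 2^{-2}I\big\}\cup\big\{\Vert a\Vert_{L^1}\ge 2^{-3}I\big\}$. A union bound, followed by taking the infimum over $(N,\delta,I)\in\mathcal{\widehat{A}}_p$ and over $p\ge 1$ (the left-hand side being free of these parameters), completes the proof. The restriction $p\ge 1$ is the essential extra hypothesis; the particular constants $8$, $2^{-2}$, $2^{-3}$ are merely convenient choices giving the required slack $2^{-2}+2^{-3}<3/4$.
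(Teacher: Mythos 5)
Your proof is correct and follows essentially the same route as the paper's: the same starting estimate $|X_{t_k}-X_{t_{k-1}}|\le 2\Vert y\Vert_\infty+\int_{t_{k-1}}^{t_k}|a_u|\,\ud u$, the same key step $\bigl(\sum_k b_k^p\bigr)^{1/p}\le\sum_k b_k=\Vert a\Vert_{L^1}$ removing the $N^{1/p}$ factor from the drift (both hinging on $p\ge 1$), and the same conclusion via the triangle inequality, the constraint $8N^{1/p}\le I\epsilon^{-1}$, and a union bound as in Theorem \ref{general-thm}. The only cosmetic difference is that you use Minkowski's inequality in $\ell^p$ where the paper uses the cruder bounds $(a+b)^p\le 2^p(a^p+b^p)$ and $(a+b)^{1/p}\le a^{1/p}+b^{1/p}$, so you get the sharper intermediate estimate $\mathbf{\vert X \vert}_p\le 2N^{1/p}\Vert y\Vert_\infty+\Vert a\Vert_{L^1}$ in place of the paper's $4N^{1/p}\Vert y\Vert_\infty+2\Vert a\Vert_{L^1}$, the extra slack being harmlessly absorbed in your final threshold argument since $2^{-2}+2^{-3}<\tfrac34$.
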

\begin{proof}
In the proof of Theorem \ref{general-thm}, we use the bound $\left|\int_s^t a_u \ud u\right| \leq |t-s|\Vert a\Vert_{\infty}$
which is rather large upper bound. Instead, for time points $\{t_k, k=1,\ldots, N \}$,  we can write
$$
|X_{t_k}-X_{t_{k-1}}| \leq 2\Vert y \Vert_{\infty} + \int_{t_{k-1}}^{t_k}|a_u|\ud u.
$$
This leads to
$$
\sum_{k=1}^{N}|X_{t_k}-X_{t_{k-1}}|^p \leq \sum_{k=1}^{N}\left(2\Vert y \Vert_{\infty} + \int_{t_{k-1}}^{t_k}|a_u|\ud
u\right)^p.
$$
Now using the elementary inequality $(a+b)^p \leq 2^p(a^p + b^p), \, \forall \, a, b \ge 0$, we obtain the upper bound
\begin{equation}
\label{eq:aux3}
\sum_{k=1}^{N}|X_{t_k}-X_{t_{k-1}}|^p \leq N2^{2p}\Vert y\Vert_{\infty}^p +
2^p\sum_{k=1}^{N}\left(\int_{t_{k-1}}^{t_k}|a_u|\ud u\right)^p.
\end{equation}
Now using the simple fact
$$
\sum_{k=1}^{N}\left(\int_{t_{k-1}}^{t_k}|a_u|\ud u\right)^p \leq \left(\sum_{k=1}^{N}\int_{t_{k-1}}^{t_k}|a_u|\ud
u\right)^p
= \Vert a\Vert_{L^1}^p
$$
and taking power $\frac{1}{p}$ on both sides of (\ref{eq:aux3}) together with the elementary inequality $(a+b)^{\frac{1}{p}}
\leq a^{\frac{1}{p}}+b^{\frac{1}{p}}, \, p\geq  1$, we finally arrive to
$$
\mathbf{\vert X \vert }_p \leq 4N\Vert y\Vert_{\infty} + 2\Vert a \Vert_{L^1}.
$$
Now, the rest of the proof goes in the same lines as the proof of Theorem \ref{general-thm}.
\end{proof}

The next Theorem studies the small deviation of the process $y$ in the $\beta$- H\"older norm.

\begin{thm}\label{general-thm3}
Assume that all the above notations and assumptions prevail. Then for any $\epsilon>0$ and for any interval $[0,T]$, we have
\begin{equation*}
\begin{split}
\Pro(\Vert y\Vert_{\beta}\leq \epsilon) & \leq \inf_{p>0}\inf_{(N,\delta,I)\in\mathcal{\widetilde{A}}_p}\Big\{ \Pro \left(\big|
\mathbf{\vert X \vert }_p-I \big|\geq 2^{-2}I\right)\\
& \qquad \qquad +\Pro\left( \Vert a\Vert_{\infty} \geq 2^{-2}IN^{-\frac{1}{p}}\delta^{-1}\right)\Big\}.
\end{split}
\end{equation*}
\end{thm}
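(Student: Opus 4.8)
The plan is to follow the proof of Theorem \ref{general-thm} essentially verbatim, the only structural change being that the crude supremum estimate $|X_t-X_s|\leq 2\Vert y\Vert_\infty+|t-s|\Vert a\Vert_\infty$ is replaced by one tailored to the Hölder norm. Writing $X_t-X_s=(y_t-y_s)-\int_s^t a_u\,\ud u$ and invoking the very definition of the Hölder norm, $|y_t-y_s|\leq \Vert y\Vert_\beta\,|t-s|^\beta$, together with $\left|\int_s^t a_u\,\ud u\right|\leq |t-s|\,\Vert a\Vert_\infty$, I obtain for the uniform partition $\{t_k\}_{k=0}^{N}$ of mesh $\delta$ the increment bound
$$
\left|X_{t_k}-X_{t_{k-1}}\right|\leq \delta^\beta\,\Vert y\Vert_\beta+\delta\,\Vert a\Vert_\infty.
$$

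First I would raise this inequality to the power $p$, sum over $k=1,\ldots,N$, and take $p$-th roots, which yields
$$
\mathbf{\vert X\vert}_p \leq N^{\frac1p}\delta^\beta\,\Vert y\Vert_\beta+N^{\frac1p}\delta\,\Vert a\Vert_\infty.
$$
Then, exactly as in the supremum case, I would normalize by $I$ through the triangle inequality $I\leq \mathbf{\vert X\vert}_p+\big|\mathbf{\vert X\vert}_p-I\big|$, obtaining
$$
1\leq \frac{N^{\frac1p}\delta^\beta}{I}\Vert y\Vert_\beta+\frac{N^{\frac1p}\delta}{I}\Vert a\Vert_\infty+\frac{\big|\mathbf{\vert X\vert}_p-I\big|}{I}.
$$
The defining constraint of $\mathcal{\widetilde{A}}_p$, namely $2\delta^\beta N^{\frac1p}\leq I\epsilon^{-1}$, is precisely what forces the coefficient of $\Vert y\Vert_\beta$ to satisfy $N^{\frac1p}\delta^\beta/I\leq (2\epsilon)^{-1}$.

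Restricting to the event $\{\Vert y\Vert_\beta\leq\epsilon\}$ then makes the first summand at most $\tfrac12$, so the remaining two terms must sum to at least $\tfrac12$; splitting this with the same elementary union bound $\Pro(Z_1+Z_2>c)\leq \Pro(Z_1>c/2)+\Pro(Z_2>c/2)$ used above, and finally taking the infimum over $p>0$ and over $(N,\delta,I)\in\mathcal{\widetilde{A}}_p$, delivers the claim. I do not anticipate any genuine obstacle: the argument is a transcription of Theorem \ref{general-thm}, and the only point demanding care is the constant bookkeeping. The factor $2$ (rather than the $4$ appearing in $\mathcal{A}_p$) in the admissibility constraint is exactly accounted for by the fact that the Hölder estimate carries the coefficient $1$ in front of $\Vert y\Vert_\beta$, whereas the supremum estimate produced the coefficient $2$ from $|y_t-y_s|\leq 2\Vert y\Vert_\infty$.
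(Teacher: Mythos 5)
Your proposal is correct and follows essentially the same route as the paper: the paper's proof likewise derives the key increment bound $|X_{t_k}-X_{t_{k-1}}|\leq \delta^{\beta}\Vert y\Vert_{\beta}+\delta\Vert a\Vert_{\infty}$ from the definition of the H\"older norm and then repeats the argument of Theorem \ref{general-thm} verbatim (your summation step is valid for every $p>0$ since the bound on each increment is uniform in $k$). Your constant bookkeeping, with the factor $2$ in $\mathcal{\widetilde{A}}_p$ accounted for by the coefficient $1$ on $\Vert y\Vert_{\beta}$ in place of the coefficient $2$ on $\Vert y\Vert_{\infty}$, matches the paper exactly.
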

\begin{proof}
The starting point of the proof of Theorem \ref{general-thm} yields the inequality

\begin{equation*}
\frac{|X_t-X_s|}{\vert t-s \vert^\beta} \leq \Vert y\Vert_{\beta} + \vert t-s \vert^{1-\beta}\Vert a\Vert_{\infty}.
\end{equation*}
Therefore, for the time points $\{t_k, k=1,\ldots, N \}$ with $t_{k}-t_{k-1}=\delta$, we obtain
$$
|X_{t_k}-X_{t_{k-1}}| \leq \delta^\beta \Vert y\Vert_{\beta} + \delta\Vert a\Vert_{\infty}.
$$
Now the rest of the proof goes in the same lines as the proof of Theorem \ref{general-thm}.
\end{proof}

\subsection{Case of  Banach--valued random variables}
This subsection is devoted to the following most general version of our main findings concerning general Banach--valued random variables. For simplicity, we assume $a=0$. Let 
$(\mathcal{B},\Vert \cdot\Vert_{\mathcal{B}})$ be a Banach space and let $X$ be a Banach--valued random variable, i.e. a measurable map $X:(\Pro,\mathcal{F},\Omega) \rightarrow \mathcal{B}$. Let $N$ be a 
fixed integer. Let $\mathcal{L}=\{L_0,L_2,\ldots,L_N\}$ be any collection from the dual space $\mathcal{B}'$, i.e. any collection of linear functionals such that 
$\Vert L_k\Vert^{'}\leq 1$ for all $k=0,\cdots, N$, where $ \Vert L_k\Vert^{'}:= \sup \{ \vert L(x) \vert  : \, x \in \mathcal{B}, \, \Vert x \Vert_{\mathcal{B}} \le 1 \}$. For any $p >0$, we define
$$
\mathbf{\vert X \vert }_{\mathcal{L},p} = \left[\sum_{k=1}^N |L_{k}(X) - L_{k-1}(X)|^p\right]^\frac{1}{p}.
$$
Similarly, as before, we define the set 
$$
\mathcal{C}_p (\epsilon) = \left\{ (N,I) \in \N \times \R_+  \big \vert \,   4N^{\frac{1}{p}}\leq I\epsilon^{-1} \right\}.
$$
By recalling that a norm of a vector $X$ in Banach space $\mathcal{B}$ can be given by
$$
\Vert X\Vert_{\mathcal{B}} = \sup_{L\in \mathcal{B}': \Vert L\Vert \leq 1} |L(X)|,
$$
we obtain immediately the following result by using our approach.
\begin{thm}
Let $(\mathcal{B},\Vert \cdot\Vert_{\mathcal{B}})$ be a Banach space and let $X$ be a Banach--valued random variable. Then for any $\epsilon > 0$, we have
\begin{equation*}
\begin{split}
\Pro(\Vert X\Vert_{\mathcal{B}}\leq \epsilon) & \leq \inf_{p>0}\inf_{(N,I)\in \mathcal{C}_p}\inf_{\mathcal{L}}\Big\{ \Pro \left(\big|
\mathbf{\vert X \vert }_{\mathcal{L},p}-I \big|\geq 2^{-1}I\right) \Big\},
\end{split}
\end{equation*}
where the last infimum is taken over all collection $\mathcal{L} \subset \mathcal{B}'$ of cardinality $N$.
\end{thm}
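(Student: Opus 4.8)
The plan is to transcribe the proof of Theorem~\ref{general-thm} in the case $a=0$, reading the abstract dual elements $L_0,\ldots,L_N$ as replacements for the point evaluations that are implicit in the supremum norm. The single structural input I would use is that each $L_k$ has $\Vert L_k\Vert^{'}\le 1$, whence $|L_k(X)|\le \Vert L_k\Vert^{'}\,\Vert X\Vert_{\mathcal{B}}\le \Vert X\Vert_{\mathcal{B}}$ for every $k$. This is exactly the analogue of the pointwise bound $|X_{t_k}|\le\Vert X\Vert_\infty$ from the process setting, and it is really the only place where the Banach norm enters the argument.

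First I would bound a single increment: by the triangle inequality and the observation above,
\[
|L_k(X)-L_{k-1}(X)|\le |L_k(X)|+|L_{k-1}(X)|\le 2\Vert X\Vert_{\mathcal{B}}.
\]
Raising to the power $p$, summing over $k=1,\ldots,N$, and taking the $p$-th root then gives the deterministic estimate
\[
\mathbf{\vert X \vert}_{\mathcal{L},p}\le 2N^{\frac1p}\Vert X\Vert_{\mathcal{B}},
\]
which plays here the role of $(\ref{eq:aux})$ with the drift term absent.

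Next I would reproduce the normalisation step. Starting from $I\le \mathbf{\vert X \vert}_{\mathcal{L},p}+|\mathbf{\vert X \vert}_{\mathcal{L},p}-I|$ and dividing by $I$, the previous estimate yields
\[
1\le \frac{2N^{\frac1p}}{I}\Vert X\Vert_{\mathcal{B}}+\frac{|\mathbf{\vert X \vert}_{\mathcal{L},p}-I|}{I}.
\]
On $\mathcal{C}_p$ the constraint $4N^{\frac1p}\le I\epsilon^{-1}$ gives $2N^{\frac1p}/I\le (2\epsilon)^{-1}$, so on the event $\{\Vert X\Vert_{\mathcal{B}}\le\epsilon\}$ the first summand is at most $1/2$. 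Hence on that event $|\mathbf{\vert X \vert}_{\mathcal{L},p}-I|\ge 2^{-1}I$, and passing to probabilities gives
\[
\Pro(\Vert X\Vert_{\mathcal{B}}\le\epsilon)\le \Pro\big(|\mathbf{\vert X \vert}_{\mathcal{L},p}-I|\ge 2^{-1}I\big).
\]
Since the left-hand side depends on neither $p$, nor $(N,I)\in\mathcal{C}_p$, nor the choice of $\mathcal{L}$, I would conclude by taking the infimum over all three. The sharper constant $2^{-1}$ in place of $2^{-2}$ is precisely the gain from $a=0$ recorded in the remark after Theorem~\ref{general-thm}.

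I do not anticipate a genuine obstacle: the argument is a clean translation of the $a=0$ case, and crucially it needs no compactness or attainment of the dual supremum, since it only uses that $\Vert X\Vert_{\mathcal{B}}\le\epsilon$ forces each fixed $|L_k(X)|\le\epsilon$. The one conceptual point worth flagging is that optimising over the family $\mathcal{L}$ here replaces optimising over partitions in the supremum-norm version, and it is exactly the representation $\Vert X\Vert_{\mathcal{B}}=\sup_{\Vert L\Vert^{'}\le 1}|L(X)|$ that singles out the norm-one functionals as the correct substitute for point evaluations.
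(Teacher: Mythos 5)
Your proof is correct and is exactly the argument the paper intends: the theorem is presented as following ``immediately'' from the approach of Theorem \ref{general-thm} with $a=0$, replacing the pointwise bound by $|L_k(X)|\le \Vert L_k\Vert^{'}\Vert X\Vert_{\mathcal{B}}\le \Vert X\Vert_{\mathcal{B}}$, which is precisely your transcription. Your constant $2^{-1}$ (rather than $2^{-2}$) also matches the paper's remark that the case $a=0$ allows this improvement, and your observation that only the easy half of the duality $\Vert X\Vert_{\mathcal{B}}=\sup_{\Vert L\Vert^{'}\le 1}|L(X)|$ is needed is accurate.
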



\subsection{Relation to concentration of measures}
\label{subsec:concentration}
In this subsection, we briefly discuss the relation of our general approach to concentration of measures phenomena and explain clearly the heuristics given in the introduction. That is, how our approach to 
small deviation is related to the concentration of measures and large deviations. \\

Let $p\geq 1$ be fixed. By Theorem \ref{general-thm} we have
\begin{equation}\label{relation-concentration}
\begin{split}
\Pro(\Vert y\Vert_{\infty}\leq \epsilon) & \leq \Big\{ \Pro \left(\big|
\mathbf{\vert X \vert }_p-I \big|\geq 2^{-2}I\right)\\
& \quad \hspace{0.2cm} +\Pro\left( \Vert a\Vert_{\infty} \geq
2^{-2}IN^{-\frac{1}{p}}\delta^{-1}\right)\Big\}.
\end{split}
\end{equation}
for any $(N,\delta,I)\in\mathcal{A}_p$. Now by choosing $I = \E  \mathbf{\vert X\vert}_p$, if the expectation exists, (or more generally, $I = \mathbb{M} \mathbf{\vert X\vert}_p$ where $\mathbb{M}$ denotes the median of a 
random variable) the first term gives probability
$$
 \Pro \left(\big|
\mathbf{\vert X \vert }_p-\E \big\vert \mathbf{\vert X\vert}_p \big|\geq 2^{-2}\E  \mathbf{\vert X\vert}_p\right)
$$
which is exactly the concentration inequality for the random variable $ \mathbf{\vert X\vert}_p$ consisting of increments of process $X$. Hence, it remains to choose parameters $N$ and $\delta$ such that 
$(N,\delta,I)\in\mathcal{A}_p$ and that the probability 
$$
 \Pro \left(\big|
\mathbf{\vert X \vert }_p-\E \big\vert \mathbf{\vert X\vert}_p \big|\geq 2^{-2}\E  \mathbf{\vert X\vert}_p\right)
$$
is minimized. For the moment, assume that we can derive a lower bound for the incremental expectation, i.e. one has
$$
\E|X_{t_k}-X_{t_{k-1}}| \geq c|t_k-t_{k-1}|^\beta
$$
for some constant $c>0$ and $\beta\in(0,1)$. In Section \ref{sec:exam}, we will show that this is the case in many interesting situations, such as fractional processes. Moreover, Minkowski's inequality for 
integrals implies
$$
\E \mathbf{\vert X\vert}_p \geq \left[\sum_{k=1}^N \left(\E|\Delta_k X|\right)^p\right]^{\frac{1}{p}},
$$
where $\Delta_k X =X_{t_k}-X_{t_{k-1}}$. Hence, in this situation we obtain the lower bound $I \geq cN^{\frac{1}{p}}\delta^\beta$ for some constant $c>0$. Consequently, one can choose 
$\delta  \approx c\epsilon^{\frac{1}{\beta}}$. As a result of this choice, one may obtain an exponential upper bound for the first probability. Surprisingly, these choices lead to the known \textit{optimal rates} 
for small deviations of fractional Gaussian processes with stationary increments having bounded \textit{spectral density} function. A typical example is \textit{fractional Brownian motion} with Hurst parameter $H < \frac{1}{2}$. We 
will come back in details to this fact in Section \ref{sec:exam}. On the other hand, with these choices the second probability in the upper bound can be estimated as
\begin{equation*}
\begin{split}
\Pro\left( \Vert a\Vert_{\infty} \geq
2^{-2}IN^{-\frac{1}{p}}\delta^{-1}\right)
\leq\Pro\left( \Vert a\Vert_{\infty} \geq
c\epsilon^{\frac{\beta-1}{\beta}}\right)
\end{split}
\end{equation*}
which leads to a large deviation of the process $a$ since the exponent $\frac{\beta - 1}{\beta}$ of $\epsilon$ is negative. These observations demonstrate that an appropriate upper bound 
for small ball probabilities for sufficiently small $\epsilon$ is linked to obtaining a
"good" upper bounds for the concentration probability of the random variable $\mathbf{\vert X\vert}_p$ and the large deviation probability of the process $a$, hence justifying the claim in the introduction. 
In other words, our general methodology to obtain upper 
bounds for small deviations is linked to two well extensively studied domains in literature. For excellent references on measure concentration \& large deviations, we refer the reader to \cite{ledoux, b-l-m, dembo-zeituni}.


\section{Examples}\label{sec:exam}
In this section we show the power of our general approach by applying it to two different cases where concentration inequalities are well-known; sum of independent random variables and Gaussian processes. 
In particular, we show that by applying our method we can reproduce the optimal rates in these cases.

\subsection{Sum of independent random variables}
We begin with a naive example when $S_n=\sum_{k=1}^{n}Z_k$, for $n \ge 1$ and $\{Z_k\}_{k \ge 1}$ is a sequence of
independent and identically distributed random variables such that there exist real numbers $ c < 0 < d$ with $c \le Z_k \le d $
with probability one for every $k$. We assume $a=0$, and for convenience we set $S_0=0$. Now in this case we have
$
\mathbf{\vert S \vert}_1 = \sum_{k=1}^n |Z_k|,
$
and with choices $p=\delta =1$ and $I=\sum_{k=1}^n \E|Z_k| =n \E\vert Z_1 \vert$, Theorem \ref{general-thm} yields an upper bound
$$
\Pro(\max_{0 \le k \le n} \vert S_k \vert \le \epsilon) \le \Pro\left( \Big \vert \sum_{k=1}^n [|Z_k| - \E|Z_k|] \Big\vert \geq 2^{-1}\sum_{k=1}^n \E|Z_k|\right)
$$
for any $\epsilon \leq \frac{\E\vert Z_1\vert}{4}$.
Now an application of the \textit{Hoeffding's inequality} \cite[Theorem 2.8]{b-l-m} yields the upper bound 
\begin{equation}
\label{eq:upper_bound}
\Pro(\max_{0 \le k \le n} \vert S_k \vert \le \epsilon) \le 2\exp \big\{- \frac{n(\E\vert Z_1\vert)^2}{ 4(|c| \vee |d|)^2} \big \}.
\end{equation}
Note that for fixed $n$ the upper bound \eqref{eq:upper_bound} is not very applicable. Indeed, one would expect that as $\epsilon$ decreases to zero, 
then upper bound \eqref{eq:upper_bound} decreases to zero. However, usually one is interested in the asymptotics of such probabilities as $n$ tends to infinity. 
For this purpose the upper bound \eqref{eq:upper_bound} provides the following result.
\begin{prop}
\label{prop:iid_sum}
Let all the notations and assumptions above prevail. Let $\epsilon_n > 0$ such that $\epsilon_n \to 0$ and $\sqrt{n}\epsilon_n \leq \frac{\E\vert Z_1\vert}{4}$ for all $n\ge1$. Then
\begin{equation}\label{sum-iid}
\begin{split}
\Pro\left(\frac{1}{\sqrt{n}}\max_{0 \le k \le n} \vert S_k \vert \le \epsilon_n\right) \le 2\exp \big\{- n\frac{  (\E \vert Z_1\vert)^2}{ 4(|c| \vee |d|)^2} \big \}. 
\end{split}
\end{equation}
In particular, when $\epsilon_n = \frac{C_1}{\sqrt{n}}$ for some constant $C_1 \in\left(0, \frac{\E|Z_1|}{4}\right)$, we obtain the estimate 
$$
\Pro\left(\frac{1}{\sqrt{n}}\max_{0 \le k \le n} \vert S_k \vert \le \epsilon_n\right) \le 2\exp\{-C\, \epsilon^{-2}_n\},
$$
where $C>0$ is a constant independent of $\epsilon_n$ and $n$.
\end{prop}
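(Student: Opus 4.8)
The plan is to recognise the claimed bound \eqref{sum-iid} as a direct rescaling of the already-established inequality \eqref{eq:upper_bound}, exploiting the crucial feature that the right-hand side of \eqref{eq:upper_bound} does not depend on $\epsilon$ at all. First I would clear the normalisation in the event: since $\sqrt{n}>0$,
\[
\left\{\tfrac{1}{\sqrt{n}}\max_{0\le k\le n}|S_k|\le \epsilon_n\right\}
=\left\{\max_{0\le k\le n}|S_k|\le \sqrt{n}\,\epsilon_n\right\},
\]
so it suffices to bound $\Pro(\max_{0\le k\le n}|S_k|\le \epsilon)$ for the particular value $\epsilon=\sqrt{n}\,\epsilon_n$.

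Next I would invoke \eqref{eq:upper_bound} with this choice of $\epsilon$. The admissibility requirement for \eqref{eq:upper_bound} is precisely $\epsilon\le \E|Z_1|/4$, and by hypothesis $\sqrt{n}\,\epsilon_n\le \E|Z_1|/4$, so the bound applies verbatim. Because the right-hand side of \eqref{eq:upper_bound} is free of $\epsilon$, substituting $\epsilon=\sqrt{n}\,\epsilon_n$ yields \eqref{sum-iid} at once. The conceptual point worth stressing is the one the authors already flagged: for fixed $n$ the estimate \eqref{eq:upper_bound} carries no information about how small $\epsilon$ is, so the entire small-deviation content is generated by coupling the scale $\epsilon=\sqrt{n}\,\epsilon_n$ to the number of summands and letting $n\to\infty$.

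Finally, for the specialisation $\epsilon_n=C_1/\sqrt{n}$ I would first verify the running hypothesis: here $\sqrt{n}\,\epsilon_n=C_1$, which lies in $(0,\E|Z_1|/4)$ by the choice of $C_1$, so \eqref{sum-iid} is in force. Inverting the relation gives $n=C_1^2\,\epsilon_n^{-2}$, and substituting this into the exponent of \eqref{sum-iid} turns $-n\,(\E|Z_1|)^2/(4(|c|\vee|d|)^2)$ into $-C\,\epsilon_n^{-2}$ with
\[
C=\frac{C_1^2\,(\E|Z_1|)^2}{4(|c|\vee|d|)^2},
\]
a constant manifestly independent of both $n$ and $\epsilon_n$. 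I do not expect a genuine obstacle here, since the substantive work was already carried out in deriving \eqref{eq:upper_bound} through Hoeffding's inequality; the only care needed is bookkeeping, namely checking that the admissibility condition survives the rescaling and that the resulting constant $C$ depends on neither $n$ nor $\epsilon_n$.
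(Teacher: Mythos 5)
Your proposal is correct and coincides with the paper's own (implicit) argument: the proposition is obtained exactly by rewriting the normalised event as $\{\max_{0\le k\le n}|S_k|\le\sqrt{n}\,\epsilon_n\}$, applying \eqref{eq:upper_bound} with $\epsilon=\sqrt{n}\,\epsilon_n$ (admissible by the hypothesis $\sqrt{n}\,\epsilon_n\le \E|Z_1|/4$), and then substituting $n=C_1^2\epsilon_n^{-2}$ to convert the $\epsilon$-free exponent into $-C\epsilon_n^{-2}$ with $C=C_1^2(\E|Z_1|)^2/\bigl(4(|c|\vee|d|)^2\bigr)$. Your bookkeeping, including the verification that the admissibility condition survives the rescaling and that $C$ is independent of $n$ and $\epsilon_n$, matches the paper's reasoning, so nothing further is needed.
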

\begin{rmk}{ \rm
We remark that the same analysis can be used to obtain similar bounds in the case where random variables $Z_k$ are not identically distributed. Indeed, if there exists real numbers 
$c_k$ and $d_k$ such that $c_k\leq Z_k \leq d_k$ almost surely, then the above computations leads to upper bound
$$
\Pro\left(\frac{1}{\sqrt{n}}\max_{0 \le k \le n} \vert S_k \vert \le \epsilon_n\right) \le 2\exp \big\{- n^2\frac{  \left(\frac{1}{n}\sum_{k=1}^n\E \vert Z_k\vert\right)^2}{4\sum_{k=1}^{n} (|c_k| \vee |d_k|)^2} \big \}. 
$$
for $\sqrt{n}\epsilon_n \leq \frac{1}{4 n}\sum_{k=1}^n \E|Z_k|$.
}
\end{rmk}

To compare our upper bound \eqref{sum-iid} with the existing results, it is a well--known fact that (see for
example \cite{dembo-zeituni}) under the conditions $\epsilon_n \to 0$ and $\sqrt{n} \epsilon_n
\to \infty$, when $\E(Z_1)=0$, $\E(Z^2_1)=1$ and the \textit{Cram\'er condition}, i.e. $\E \exp \{ h \vert Z_1 \vert \} < \infty$ for
some $h >0$, we have

\begin{equation*}
\Pro ( \frac{1}{\sqrt{n}} \max_{0 \le k \le n} \vert S_k \vert \le \epsilon_n ) \sim \exp \{ -
\frac{\pi^2}{8} \epsilon^{-2}_n \}.
\end{equation*}
With our method, the sequence $\epsilon_n$ is of order $n^{-\frac12}$. Hence to obtain the exponential upper bound with the optimal rate $\epsilon^{-2}_n$, in fact the assumption 
$\sqrt{n}\epsilon_n \to \infty$ can be dropped. Furthermore, our result covers case where random variables $Z_k$ are not necessarily identically distributed. On the other hand, we assumed the restriction that 
the random variables $Z_k$ are bounded almost surely. For similar and related results on behavior of the maximum of partial sums of independent and identically distributed
random variables under \textit{Berry-Esseen's} type conditions involving third moments, see the references
\cite{chung,sum-estimate1}.

\subsection{H\"older continuous processes with independent increments}

This subsection is devoted to the case when the process $X$ has independent increments and is H\"older continuous with H\"older exponent $H \in (0,1)$. 
We also assume that for some $\beta \geq H$, and for some constant $c>0$, we have 
\begin{equation}
\label{bounded_inc_exp}
\E|X_{t_k} - X_{t_{k-1}}| \geq c\delta^{\beta},
\end{equation}
where $\{t_k, k=1, \cdots,,N\}$ is any partition of the interval $[0,T]$ such that $\delta = t_k - t_{k-1}$. 

\begin{thm}\label{ex:bounded}
Assume that $X$ is a H\"older continuous process with H\"older exponent $H\in(0,1)$ such that the H\"older constant $C(\omega)$ is almost surely bounded, and that $X$ has independent increments. Furthermore, assume that (\ref{bounded_inc_exp}) holds for some $\beta\geq H$. Then for 
any $\epsilon\in(0,1)$ and any interval $[0,T]$, we have

\begin{equation*}
\Pro(\Vert X\Vert_{\infty}\leq \epsilon)  \leq 2\exp\left\{ -CT\epsilon^{-\gamma}\right\},
\end{equation*}
for some constant $C>0$, where $\gamma = \frac{1+2H-2\beta}{\beta}$.
\end{thm}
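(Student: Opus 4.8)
The plan is to invoke Theorem \ref{general-thm} with $a\equiv 0$ and, crucially, with the exponent $p=1$. This is the decisive choice: it turns $\mathbf{\vert X\vert}_1=\sum_{k=1}^N|\Delta_k X|$, where $\Delta_k X=X_{t_k}-X_{t_{k-1}}$, into a genuine sum of \emph{independent} summands, so that a classical Hoeffding estimate applies directly, without any detour through concentration of the $L^p$-norm functional (which would be needed for general $p$). I would take $I=\E\,\mathbf{\vert X\vert}_1=\sum_{k=1}^N\E|\Delta_k X|$; this is finite because the increments are bounded almost surely. Since $a\equiv 0$, the remark following Theorem \ref{general-thm} lets me replace $2^{-2}$ by $2^{-1}$, so the whole task reduces to bounding $\Pro\bigl(|\mathbf{\vert X\vert}_1-I|\ge 2^{-1}I\bigr)$ for a single admissible triple $(N,\delta,I)\in\mathcal{A}_1$.

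Next I would fix the parameters using a uniform partition with $\delta=T/N$, so that $N\delta=T$ and the second constraint defining $\mathcal{A}_1$ holds with equality. By linearity of expectation and the lower bound (\ref{bounded_inc_exp}),
\begin{equation*}
I=\sum_{k=1}^N\E|\Delta_k X|\ge Nc\delta^\beta=c\,T^\beta N^{1-\beta}.
\end{equation*}
The admissibility requirement $4N\le I\epsilon^{-1}$ is then implied by $4N\epsilon\le c\,T^\beta N^{1-\beta}$, that is by $N\le(c/4)^{1/\beta}T\epsilon^{-1/\beta}$. I would let $N$ be the largest integer meeting this bound, which gives $N\asymp T\epsilon^{-1/\beta}$ for all sufficiently small $\epsilon$ and guarantees $(N,\delta,I)\in\mathcal{A}_1$.

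The heart of the estimate is then Hoeffding's inequality \cite[Theorem 2.8]{b-l-m}. The independent-increments hypothesis makes $\{|\Delta_k X|\}_{k=1}^N$ independent, and the almost sure boundedness of the Hölder constant (say $C(\omega)\le M$ a.s.) yields $0\le|\Delta_k X|\le M\delta^H$ a.s. Hence
\begin{equation*}
\Pro\bigl(|\mathbf{\vert X\vert}_1-I|\ge 2^{-1}I\bigr)\le 2\exp\left\{-\frac{I^2}{2NM^2\delta^{2H}}\right\}.
\end{equation*}
Inserting $I\ge c\,T^\beta N^{1-\beta}$ and $\delta^{2H}=T^{2H}N^{-2H}$, the exponent is at least $\tfrac{c^2}{2M^2}T^{2\beta-2H}N^{1+2H-2\beta}$. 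Substituting $N\asymp T\epsilon^{-1/\beta}$ and using the identity $1+2H-2\beta=\beta\gamma$, the power of $N$ collapses to $N^{\beta\gamma}\asymp T^{\beta\gamma}\epsilon^{-\gamma}$, while the powers of $T$ combine through $2\beta-2H+\beta\gamma=1$, leaving an exponent of order $\tfrac{c^2}{2M^2}T\epsilon^{-\gamma}$. This delivers exactly $\Pro(\Vert X\Vert_\infty\le\epsilon)\le 2\exp\{-CT\epsilon^{-\gamma}\}$.

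The algebra above is routine; the only points requiring genuine care are the simultaneous verification that the chosen $(N,\delta,I)$ lies in $\mathcal{A}_1$ (including the integer rounding of $N$ and the requirement $N\ge 1$, both of which cost only a constant factor absorbed into $C$, the range $\epsilon\in(0,1)$ being handled by adjusting $C$) and the exponent arithmetic $2\beta-2H+\beta\gamma=1$ that makes $T$ appear to the first power. I would finally remark that, although the displayed inequality is produced for every admissible $\epsilon$, it is \emph{informative} (that is, it decays to $0$ as $\epsilon\to 0$) precisely when $\gamma>0$, i.e. when $H\le\beta<H+\tfrac12$; the boundary case $\beta=H$ recovers the rate $\epsilon^{-1/H}$, which is the optimal small-deviation exponent for fractional Brownian motion with $H<\tfrac12$.
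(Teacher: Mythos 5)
Your proposal is correct and takes essentially the same route as the paper's own proof: both set $a\equiv 0$, choose $p=1$, bound the increments by $M\delta^{H}$ via the H\"older hypothesis, apply Hoeffding's inequality to the sum of independent terms $\mathbf{\vert X\vert}_1$ with $I=\E\,\mathbf{\vert X\vert}_1\geq cN\delta^{\beta}$, and pick $\delta\approx\epsilon^{1/\beta}$, $N\approx T/\delta$ to produce the exponent $T\epsilon^{-\gamma}$. Your additional bookkeeping (verifying $(N,\delta,I)\in\mathcal{A}_1$, the integer rounding of $N$, and absorbing the degenerate regime into the constant $C$) only makes explicit what the paper leaves implicit.
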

\begin{rmk}{\rm
Note that it may happen that the given upper bound does not converge to zero as $\epsilon \to 0$.  Indeed, 
if the process behaves too badly in a sense that $\beta \geq H+\frac12$, then the upper bound becomes useless. This reveals how the lower bound for expectation of increments affects to the upper bound of 
small ball probability. Note also that in many interesting examples the exponent $\beta$ in the lower bound for the increments is ''close'' to H\"older exponent $H$. 
}
\end{rmk}

\begin{proof}[Proof of Theorem \ref{ex:bounded}]
Using the H\"older continuity assumption, we obtain 
$$
|X_{t_k}-X_{t_{k-1}}|  \leq C(\omega)\delta^{H}. 
$$
Furthermore, (\ref{bounded_inc_exp}) implies $\E\mathbf{\vert X\vert}_1 \geq c N\delta^{\beta}$ and consequently, the results of previous subsection gives
$$
\Pro\left(\mathbf{|\vert X\vert}_1 - \E\mathbf{\vert X\vert}_1| > 2^{-1}\E\mathbf{\vert X\vert}_1\right) \leq 2 \exp\left(-C\frac{N^2\delta^{2\beta}}{\delta^{2H}N}\right).
$$
Now we obtain the given upper bound by choosing
$N \approx \frac{T}{\delta}$ and $\delta \approx 2^{\frac{1}{\beta}}\epsilon^{\frac{1}{\beta}}$. 
\end{proof}
\begin{rmk}{\rm We point out that up to our knowledge the exponential upper bound given in Theorem \ref{ex:bounded} for small ball probabilities for general independent increment H\"older continuous process is a new 
 result. 
 }
\end{rmk}

\subsection{Gaussian processes}\label{gaussian}

In this subsection we continue examples in continuous setup. To illustrate the power of our methodology, we derive some upper bound for small ball probabilities of the process $y$ when the process $X$ belongs 
to the class $\mathcal{X}^{(H,\beta)}$ (see Definition \ref{defn:Xalpha} below) of Gaussian processes. We stress that this class of Gaussian processes is considerably large and in particular includes the class 
of Gaussian processes with stationary increments property having H\"older continuous sample paths. It can be said that the  class of Gaussian processes with stationary increments property is the 
widest class of Gaussian processes in the literature in which the small deviation problem is considered.

We begin by recalling the following concentration inequality for $n$-dimensional Gaussian measure. 
\begin{thm}
\label{thm:gauss_con}
Let $\gamma^n$ be an $n$-dimensional Gaussian measure and let $f:\R^n \rightarrow \R$ be an $L$-Lipschitz function. Then for any $h>0$ we have
$$
\gamma^n\left(\vert f - \E f\vert \geq h\right) \leq \exp\left(-\frac{h^2}{2L^2}\right).
$$
\end{thm}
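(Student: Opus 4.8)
The plan is to prove the sharp sub-Gaussian Laplace-transform estimate for $f$ and then convert it to a tail bound by a Chernoff argument. Since the notation $\gamma^n$ refers to the standard Gaussian measure, I may first normalise: subtracting $\E f$ does not change the Lipschitz constant, and replacing $f$ by $f/L$ and $h$ by $h/L$ turns an $L$-Lipschitz function into a $1$-Lipschitz one while leaving the ratio $h/L$ (hence the final exponent) intact. So it suffices to establish, for a centred $1$-Lipschitz $f$,
\[
\E_{\gamma^n}\, e^{\lambda f} \le e^{\lambda^2/2}, \qquad \lambda \in \R .
\]

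First I would obtain this Laplace estimate via the Gaussian logarithmic Sobolev inequality of Gross together with the Herbst argument, because this route produces the \emph{sharp} constant $\tfrac12$ demanded by the statement. Writing $H(\lambda) = \E_{\gamma^n} e^{\lambda f}$ and applying the inequality $\mathrm{Ent}_{\gamma^n}(g^2) \le 2\,\E_{\gamma^n}|\nabla g|^2$ to $g = e^{\lambda f/2}$, one computes $\mathrm{Ent}_{\gamma^n}(g^2) = \lambda H'(\lambda) - H(\lambda)\log H(\lambda)$ and bounds $|\nabla g|^2 = \tfrac{\lambda^2}{4}|\nabla f|^2 e^{\lambda f} \le \tfrac{\lambda^2}{4} e^{\lambda f}$ using $|\nabla f| \le 1$ almost everywhere. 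This yields the differential inequality
\[
\lambda H'(\lambda) - H(\lambda)\log H(\lambda) \le \tfrac{\lambda^2}{2}\, H(\lambda).
\]
Setting $K(\lambda) = \lambda^{-1}\log H(\lambda)$, this is precisely $K'(\lambda) \le \tfrac12$; since $K(\lambda) \to \E f = 0$ as $\lambda \to 0^+$, integrating gives $K(\lambda) \le \tfrac{\lambda}{2}$, i.e. the claimed $H(\lambda) \le e^{\lambda^2/2}$. The log-Sobolev inequality itself I would either cite or derive by tensorising the one-dimensional case (obtained, for instance, as a central-limit passage from the two-point Bernoulli inequality).

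The tail bound then follows from Markov's inequality: for $\lambda,h>0$,
\[
\gamma^n(f \ge h) \le e^{-\lambda h} H(\lambda) \le e^{-\lambda h + \lambda^2/2},
\]
and optimising at $\lambda = h$ gives $\gamma^n(f \ge h) \le e^{-h^2/2}$. Applying the same reasoning to $-f$ (again centred and $1$-Lipschitz) controls the lower tail, and undoing the normalisation restores the exponent $-h^2/(2L^2)$.

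I expect the main obstacle to be twofold. The substantive difficulty is securing the \emph{optimal} constant: a more elementary Gaussian interpolation argument, comparing $f(X)$ and $f(Y)$ along $X\sin\theta + Y\cos\theta$ for independent standard Gaussians $X,Y$ and conditioning on $X_\theta$ (which is independent of $\tfrac{d}{d\theta}X_\theta$), proves a sub-Gaussian estimate by a clean computation but only with the weaker constant $\pi^2/8$ in place of $\tfrac12$; recovering the sharp exponent really requires the log-Sobolev/Herbst route (or, alternatively, Gaussian isoperimetry). The technical difficulty is justifying the Herbst manipulations—finiteness and smoothness of $H(\lambda)$, differentiation under the integral, and the limit $K(0^+)=\E f$—which I would handle by first proving the bound for smooth compactly supported Lipschitz approximations of $f$ and then passing to the limit. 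I note finally that the two-sided event $\{|f-\E f|\ge h\}$ strictly gives $2e^{-h^2/(2L^2)}$ by a union bound; the cleaner stated form is the one-tail estimate, which is the quantity the later applications actually invoke.
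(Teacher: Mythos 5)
Your proof is correct, but note that the paper itself offers no proof of this statement: it is introduced with ``We begin by recalling the following concentration inequality,'' i.e.\ it is quoted as a standard fact (of the kind found in the measure-concentration references \cite{ledoux, b-l-m} the paper cites), and the only proof the paper reproduces in that subsection is the one for Corollary \ref{cor:gaussian_general_con}, which merely \emph{applies} this theorem to $f(x)=\Vert Ax\Vert_2$. So what you have supplied is a self-contained derivation of the cited ingredient, and your route --- Gaussian log-Sobolev plus the Herbst argument, with Chernoff optimisation at $\lambda=h$ --- is the canonical way to get the sharp exponent $h^2/(2L^2)$; your side remark that the Maurey--Pisier interpolation argument only yields $\pi^2/8$ in the Laplace exponent is accurate, and your handling of the technical points (smooth approximation of $f$, $K(0^+)=\E f$) is sound. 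Your final observation is also well taken and worth making explicit: the Herbst route gives the one-sided bound $\gamma^n(f-\E f\geq h)\leq e^{-h^2/(2L^2)}$, so the two-sided event as literally stated in the theorem picks up a factor $2$ by the union bound (the prefactor-free two-sided form is the isoperimetric statement around the \emph{median}, $\gamma^n(\vert f-m_f\vert\geq h)\leq 2(1-\Phi(h/L))\leq e^{-h^2/(2L^2)}$). This discrepancy is harmless for the paper, since every downstream use (Corollary \ref{cor:gaussian_general_con}, Theorem \ref{gaussian-concentration}) already absorbs multiplicative constants into an unspecified $C$ or $C_1$, but your version is the honest statement of what the Herbst argument proves.
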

This gives us immediately the following general well-known result which is the key for our analysis. For the sake of completeness and simplicity we reproduce the proof.

\begin{cor}
\label{cor:gaussian_general_con}
Let $Y=(Y_1,Y_2,\ldots, Y_n)$ be a Gaussian vector with covariance matrix $\Gamma =(\Gamma_{ij})_{ 1 \le i,j \le n}$. Then for any $h>0$
$$
\Pro \left(\big|
\Vert Y \Vert_2-\E \Vert Y \Vert_2 \big|\geq h\right) \leq C\exp\left(-\frac{h^2}{2\Vert \Gamma\Vert_2}\right).
$$
\end{cor}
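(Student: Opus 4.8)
The plan is to realize $\Vert Y\Vert_2$ as a Lipschitz functional of a \emph{standard} Gaussian vector and then invoke Theorem~\ref{thm:gauss_con} directly. Since $\Gamma$ is symmetric and positive semidefinite, it admits a symmetric (positive semidefinite) square root $\Gamma^{1/2}$, and I would write $Y \law \Gamma^{1/2}Z$, where $Z$ is distributed according to the standard $n$-dimensional Gaussian measure $\gamma^n$. Setting $f(z)=\Vert \Gamma^{1/2}z\Vert_2$, one has $\Vert Y\Vert_2 \law f(Z)$, so the left-hand probability becomes $\gamma^n(\vert f-\E f\vert\geq h)$, which is exactly the quantity controlled by Theorem~\ref{thm:gauss_con}.

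The one computation to carry out is the Lipschitz constant of $f$. By the reverse triangle inequality followed by the definition of the operator norm,
$$
\vert f(z)-f(w)\vert = \big\vert \Vert \Gamma^{1/2}z\Vert_2 - \Vert \Gamma^{1/2}w\Vert_2\big\vert \leq \Vert \Gamma^{1/2}(z-w)\Vert_2 \leq \Vert \Gamma^{1/2}\Vert_{\mathrm{op}}\,\Vert z-w\Vert_2,
$$
so $f$ is $L$-Lipschitz with $L=\Vert \Gamma^{1/2}\Vert_{\mathrm{op}}$. It then remains to identify this constant: since $\Gamma^{1/2}$ is symmetric, its operator norm equals its largest eigenvalue, whence $L^2 = \lambda_{\max}(\Gamma^{1/2})^2 = \lambda_{\max}(\Gamma) = \Vert\Gamma\Vert_2$, reading $\Vert\Gamma\Vert_2$ as the spectral (operator) norm of $\Gamma$.

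Plugging $L^2=\Vert\Gamma\Vert_2$ into Theorem~\ref{thm:gauss_con} then yields the bound $\Pro(\vert \Vert Y\Vert_2 - \E\Vert Y\Vert_2\vert \geq h) \leq \exp(-h^2/(2\Vert\Gamma\Vert_2))$, i.e. the claim with $C=1$; the constant $C$ in the statement is only a convenience. I expect no serious obstacle here, since the content is entirely in the two observations that composing $\Vert\cdot\Vert_2$ with the linear map $\Gamma^{1/2}$ transports the concentration statement for the isotropic measure $\gamma^n$ to the anisotropic vector $Y$, and that the resulting Lipschitz constant is the operator norm of $\Gamma^{1/2}$. The only point demanding a little care is the interpretation of $\Vert\Gamma\Vert_2$ as the spectral norm $\lambda_{\max}(\Gamma)$ rather than, say, the Hilbert--Schmidt norm; with that reading the identity $\Vert\Gamma^{1/2}\Vert_{\mathrm{op}}^2=\Vert\Gamma\Vert_2$ is exact. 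A minor technical remark is that when $\Gamma$ is singular one simply uses the positive semidefinite square root, leaving the argument unaffected.
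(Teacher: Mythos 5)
Your proof is correct and follows essentially the same route as the paper's: write $Y \law \Gamma^{1/2}Z$ for standard Gaussian $Z$, observe that $z \mapsto \Vert \Gamma^{1/2} z\Vert_2$ is Lipschitz with constant $\Vert \Gamma^{1/2}\Vert_{\mathrm{op}}$, identify $\Vert \Gamma^{1/2}\Vert_{\mathrm{op}}^2 = \Vert \Gamma\Vert_2$ via the spectral mapping of the symmetric square root, and apply Theorem~\ref{thm:gauss_con}. Your explicit reverse-triangle-inequality step and the remark covering the singular (positive semidefinite) case are small refinements over the paper's exposition, which assumes positive definiteness and verifies the distributional identity $Y \law A\xi$ by a direct covariance computation.
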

\begin{proof}
Since $\Gamma$ is positive-definite, it admits a square root matrix $A$ with elements $a_{ij}$ satisfying $A^2=\Gamma$. Moreover, $A$ is symmetric. Let now $\xi=(\xi_1,\ldots,\xi_n)$ be a vector of independent standard normal random variables. Then the vector $\tilde{Y} = A\xi$ has the same distribution as $Y$. Indeed, we have
$$
\tilde{Y}_i = \sum_{i=1}^n a_{ik}\xi_k.
$$
Hence for any $i$ and $j$ we have
$$
\E[\tilde{Y}_i\tilde{Y}_j] = \sum_{k=1}^n \sum_{p=1}^n a_{ik}a_{jp}\E[\xi_k\xi_p] = \sum_{k=1}^n a_{ik}a_{jk}.
$$
On the other hand,
$$
(A^2)_{ij} = \sum_{k=1}^n a_{ik}a_{kj} = \sum_{k=1}^n a_{ik}a_{jk}
$$
by symmetry of $A$. Define now a function $f(x_1,\ldots, x_n) = \Vert Ax\Vert_2$. This function is $\Vert A\Vert_2$-Lipschitz, and consequently Theorem \ref{thm:gauss_con} implies that
\begin{equation}
\Pro \left( \big|\Vert Y\Vert_2 - \E \Vert Y \Vert_2 \big| \geq h\right) \leq \exp\left(-C\frac{h^2}{\Vert A\Vert_2^2}\right).
\end{equation}
Next recall that the norm $\Vert A\Vert_2$ of a matrix $A$ corresponds to the largest singular value or, in the case of symmetric matrices, largest eigenvalue of $A$. On the other hand, the eigenvalues of $A^2$ are the eigenvalues of $A$ squared which gives immediately that $\Vert A\Vert_2^2 = \Vert A^2 \Vert_2 = \Vert \Gamma\Vert_2$. This completes the proof.
\end{proof}
\begin{rmk}{\rm
If we replace $\E \Vert Y \Vert_2 $ with $$\sqrt{\E \Vert Y \Vert_2^2} =\sqrt{\sum_{k=1}^N \E \left(X_{t_k}-X_{t_{k-1}}\right)^2},$$ then we have (see \cite{b-h})
$$
\Pro \left(\big|
\Vert Y \Vert_2-\sqrt{\E \Vert Y \Vert_2^2} \big|\geq h\right) \leq C\exp\left(-\frac{h^2}{4\Vert \Gamma\Vert_2}\right).
$$
This will be used throughout this section.
}\end{rmk} 
\begin{rmk}{\rm
The Corollary \ref{cor:gaussian_general_con} reveals that in the Gaussian setup, it is convenient to choose $p=2$. Indeed, for $p\neq 2$ one has to control different norm of a matrix $A$ which does not 
translate to the same norm of the covariance matrix $\Gamma$. 
}\end{rmk}

\begin{rmk}{\rm
We also remark that Corollary \ref{cor:gaussian_general_con} could also have been deduced from the concentration inequality for Banach-valued Gaussian processes which states that for any Banach--valued 
Gaussian process $X$, and any $h>0$:
$$
\Pro\left(\big| \Vert X\Vert_{\mathcal{B}} - \E\Vert X\Vert_{\mathcal{B}}   \big|  \geq h\right) \leq \exp\left(-\frac{h^2}{2\sigma^2}\right),
$$
where $\sigma^2 := \sup_{ \{ L\in \mathcal{B}':\Vert L\Vert' \leq 1 \} } \E L(X)^2$. Similarly, this result can be used for values $p\neq 2$. Indeed, for our purposes it is sufficient to consider spaces $\R^N$ 
equipped with the norm $\Vert \cdot \Vert_p$.
}
\end{rmk}

The clear message of Corollary \ref{cor:gaussian_general_con} is that to obtain an upper bound for small deviations of Gaussian process $X$, it is sufficient to control the norm of the covariance matrix 
$\Gamma$ of the increments $\Delta_k X:= X_{k \delta} - X_{(k-1) \delta}$, or equivalently, to bound the largest eigenvalue $\lambda_{\text{max}}$ of covariance matrix $\Gamma$. In particular, we wish to 
choose $\delta$ small and $N$ large which translates the problem into the controlling of the norm of the matrix asymptotically as the size of the matrix increases to infinity.\\ 

We will now turn to study the problem how to control the norm of the covariance matrix. We will begin by providing some rough bounds in a general setup. However, even these bounds turn out to provide the 
optimal rates in some cases. We will consider the following class.
\begin{defn}\label{defn:Xalpha}
Let $H\in(0,1)$ and $\beta\in[H,1)$. A centered Gaussian process $X=\{ X_t\}_{t\in[0,T]}$ with $X_0=0$ and covariance function $R$ belongs to the
\emph{class $\mathcal{X}^{(H,\beta)}$} if the following properties hold:
\begin{itemize}
\item[(1)] The incremental variance function $\sigma^2(s,t)$ defined by
$$
\sigma^2(s,t) = \E[(X_t - X_s)^2]
$$
is $C^1$ for $s\neq t$, and satisfies
$$
|\partial_s\partial_t \sigma^2(s,t)| \leq c|t-s|^{2H-2}.
$$
\item[(2)] The function $\sigma^2$ satisfies
$$
c|t-s|^{2\beta} \leq \sigma^2(s,t) \leq C|t-s|^{2H}.
$$
\end{itemize}
\end{defn}

For the given class $\mathcal{X}^{(H,\beta)}$, we obtain the following concentration inequalities for our purposes with relatively simple bounds. Rather surprisingly, the bounds depend on the value of $H$ and 
not on the parameter $\beta$. 
.
\begin{thm}\label{gaussian-concentration}
\label{concentration1}
Let $X\in\mathcal{X}^{(H,\beta)}$ and $\delta, N> 0$ be such that
$\delta N < T$. Define the $\R^{N}$-valued random variable $\mathbf{Y}=(Y_1, \cdots, Y_N)$ and the number $I$ by

\begin{equation*}
Y_k= X_{k\delta} - X_{(k-1)\delta} \quad \text{and} \quad  I^2 = \sum_{k=1}^{N} \E (X_{k\delta} - X_{(k-1)\delta})^2.
\end{equation*}

Then there exist constants $C_1,C_2$ independent of $\delta$,$N$ and $T$ such that for any $h >0$ the following bounds hold:

\begin{enumerate}
 \item The case when $H > \frac{1}{2}$, then

\begin{eqnarray*}
 \Pro \big( \big| \mathbf{\vert Y \vert}_2 - I \big| \ge h \big) \le C_1 \exp \Big( -C_{2} \frac{Nh^2}{(\delta N)^{2H}}\Big).
\end{eqnarray*}

 \item The case when $H \in (0,\frac{1}{2})$, then

 \begin{equation*}
\Pro \big( \big| \mathbf{\vert Y \vert}_2 - I \big| \ge h \big) \le C_1 \exp \Big( -C_{2} \frac{h^2}{\delta^{2H}}\Big).
\end{equation*}

\item The case when $H=\frac{1}{2}$, then
\begin{equation*}
\Pro \big( \big| \mathbf{\vert Y \vert}_2 - I \big| \ge h \big) \le C_1 \exp \Big( -C_{2} \frac{h^2}{\log N\delta^{2H}}\Big).
\end{equation*}
\end{enumerate}
\end{thm}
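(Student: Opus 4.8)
The plan is to reduce the statement entirely to a bound on the largest eigenvalue of the covariance matrix of the increment vector $\mathbf{Y}$, and then to estimate that eigenvalue through a row-sum (Gershgorin) argument. Writing $\Gamma = (\Gamma_{jk})_{1\le j,k\le N}$ for the covariance matrix of $\mathbf{Y}$, the remark following Corollary \ref{cor:gaussian_general_con} gives
$$
\Pro\left(\big|\mathbf{\vert Y \vert}_2 - I\big| \ge h\right) \le C\exp\left(-\frac{h^2}{4\Vert\Gamma\Vert_2}\right),
$$
since $I=\sqrt{\E\Vert\mathbf{Y}\Vert_2^2}$ is precisely the square root of the sum of the diagonal entries of $\Gamma$. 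Because $\Gamma$ is symmetric and positive semidefinite, $\Vert\Gamma\Vert_2=\lambda_{\max}(\Gamma)$, so the three claimed bounds follow once I show that $\lambda_{\max}(\Gamma)$ is bounded, up to a constant, by $\delta^{2H}$, by $\delta^{2H}\log N$, and by $(\delta N)^{2H}/N$ in the cases $H<\tfrac12$, $H=\tfrac12$, and $H>\tfrac12$ respectively. The whole problem is thus self-contained within controlling $\lambda_{\max}(\Gamma)$.

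For the eigenvalue bound I would invoke Gershgorin's theorem, which for a symmetric matrix yields $\lambda_{\max}(\Gamma)\le\max_{j}\sum_{k=1}^N|\Gamma_{jk}|$. To estimate the individual entries I express them through the mixed second derivative of the incremental variance. Writing $A_\ell=[(\ell-1)\delta,\ell\delta]$, a double application of the fundamental theorem of calculus gives, for $j\neq k$,
$$
\Gamma_{jk}=\E[Y_jY_k]=-\frac12\int_{A_j}\int_{A_k}\partial_a\partial_b\,\sigma^2(a,b)\,\ud b\,\ud a,
$$
so that property (1) of Definition \ref{defn:Xalpha} gives $|\Gamma_{jk}|\le\tfrac{c}{2}\int_{A_j}\int_{A_k}|a-b|^{2H-2}\ud b\,\ud a$, while the diagonal entries satisfy $\Gamma_{jj}=\sigma^2((j-1)\delta,j\delta)\le C\delta^{2H}$ by property (2). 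For non-adjacent blocks ($|j-k|=m\ge 2$) one has $|a-b|\ge(m-1)\delta$, whence $|\Gamma_{jk}|\le\tfrac{c}{2}\delta^{2H}(m-1)^{2H-2}$; for adjacent blocks ($m=1$) the kernel $|a-b|^{2H-2}$ is singular only on the shared endpoint, and this singularity is integrable in two dimensions for every $H\in(0,1)$, so a direct computation of the double integral again yields $|\Gamma_{jk}|\le C\delta^{2H}$.

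Collecting these estimates, the row sum becomes $\sum_{k}|\Gamma_{jk}|\lesssim\delta^{2H}\bigl(1+\sum_{m=1}^{N}m^{2H-2}\bigr)$, and the three regimes of the theorem emerge precisely from the asymptotics of $\sum_{m=1}^N m^{2H-2}$: the series converges when $H<\tfrac12$, behaves like $\log N$ when $H=\tfrac12$, and grows like $N^{2H-1}$ when $H>\tfrac12$, in which last case $\delta^{2H}N^{2H-1}=(\delta N)^{2H}/N$. Substituting each of these into the Gershgorin bound for $\lambda_{\max}(\Gamma)$ and then into the concentration estimate produces the three displayed inequalities. The main obstacle I anticipate is the careful treatment of the adjacent-block double integral, where the singular kernel must be integrated against the shared endpoint; everything else is bookkeeping, and it is reassuring that the trichotomy in $H$ drops out automatically from the convergence behaviour of a single power series rather than from any case-specific device.
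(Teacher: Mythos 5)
Your proposal is correct and takes essentially the same route as the paper: reduction to $\lambda_{\max}(\Gamma)$ via the Gaussian concentration remark with $I=\sqrt{\E\Vert\mathbf{Y}\Vert_2^2}$, then a maximal-row-sum bound on the eigenvalue (your Gershgorin estimate coincides, for symmetric $\Gamma$, with the paper's $\Vert\Gamma\Vert_2\le\sqrt{\Vert\Gamma\Vert_1\Vert\Gamma\Vert_\infty}=\Vert\Gamma\Vert_1$), with entries controlled through $|\partial_s\partial_t\sigma^2(s,t)|\le c|t-s|^{2H-2}$ and the trichotomy emerging from $\sum_m m^{2H-2}$. If anything, your separate handling of the diagonal (via property (2)) and of the adjacent blocks (explicit integration of the integrable singularity) is slightly more careful than the paper's, which applies the derivative bound uniformly even on the diagonal where $\sigma^2$ is only assumed $C^1$ for $s\neq t$.
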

\begin{proof}
We begin by giving bound for the norm of covariance matrix of vector of the increments. Now we have
$$
\E[Y_iY_j] = \frac12\int_{(i-1)\delta}^{i\delta}\int_{(j-1)\delta}^{j\delta}\partial_s\partial_t \sigma^2(s,t)\ud s \ud t  
$$
which leads to bound 
\begin{equation}
\label{eq:cov_bound}
\big|  \E[Y_iY_j]   \big| \leq \frac12\delta^{2H}(1+|i-j|)^{2H-2}.
\end{equation}

Let now $\Gamma$ denote the covariance matrix of the Gaussian vector $(Y_1,\ldots, Y_N)$. Now a rough bound for norm $\Vert A\Vert_2$ for any matrix $A$ can be given as
$$
\Vert A\Vert_2 \leq \sqrt{\Vert A\Vert_1 \Vert A\Vert_{\infty}}.
$$
Furthermore, it is well-known that matrix norms $\Vert A\Vert_1$ and $\Vert A\Vert_{\infty}$ for a $n\times n$-square matrix are given by
$$
\Vert A\Vert_1 = \max_{1\leq j\leq n} \sum_{k=1}^n |a_{kj}|
$$
and
$$
\Vert A\Vert_{\infty} = \max_{1\leq j\leq n} \sum_{k=1}^n |a_{jk}|
$$
respectively. Hence, by symmetry of $\Gamma$, we get $\Vert \Gamma \Vert_2 \leq \Vert \Gamma \Vert_1$.
Consequently, by \eqref{eq:cov_bound}
$$
\Vert \Gamma \Vert_2 \leq C\max_{1\leq j\leq N} \sum_{k=1}^N \delta^{2H}(1+|k-j|)^{2H-2}.
$$
Now with some elementary computations we have
$$
\max_{1\leq j\leq N} \sum_{k=1}^N (1+|k-j|)^{2H-2} \leq C
$$
for $H<\frac12$ and 
$$
\max_{1\leq j\leq N} \sum_{k=1}^N (1+|k-j|)^{2H-2} \sim N^{2H-1}
$$
for $H>\frac12$ and 
$$
\max_{1\leq j\leq N} \sum_{k=1}^N (1+|k-j|)^{2H-2} \sim \log N
$$
for $H=\frac12$. This proves the claim.
\end{proof}

\begin{rmk}{\rm
Note that to obtain the concentration inequalities in Theorem \ref{gaussian-concentration} the only assumption used is the differentiability of the function $\sigma$ together with the bound 
$|\partial_s\partial_t \sigma^2(s,t)| \leq C|t-s|^{2H-2}$. However, this is only used to guarantee that the covariances of increments satisfy
$
\sum_{k=1}^n \E(Y_kY_j) \leq C\delta^{2H}N^{\gamma},
$
where $\gamma$ depends on the value of $H$.
Obviously one can derive similar concentration inequality for arbitrary Gaussian process which does not satisfy our assumptions.
}
\end{rmk}

\begin{rmk}{\rm
Another relevant bound to use is Frobenius norm given by $\Vert \Gamma \Vert_{F}^2 := \sum_{i,j=1}^n |\Gamma_{ij}|^2$ which was successfully used by Baudoin and Hairer \cite{b-h} for their purposes. However, 
Frobenius norm in general gives rough bound and while it provides exponential upper bounds for small deviations via our method, the Frobenius norm fails to provide optimal rates. Indeed, even in the simple 
case of a standard Brownian motion where the covariance matrix $\Gamma$ is a diagonal matrix, Frobenius norm gives half of the best possible rate. 
}\end{rmk}

With concentration inequalities derived in Theorem \ref{gaussian-concentration}, it is straightforward to give the following general theorem. We will only consider the cases $H\neq \frac12$. However, the 
case $H=\frac{1}{2}$ can be covered similarly.

\begin{thm}
\label{thm:gaussian}
Let $X\in\mathcal{X}^{(H,\beta)}$. Then there exist positive constants $C_1$ and $C_2$ such that for any $\epsilon\in(0,1)$ and for any interval $[0,T]$, we have

\begin{enumerate}
 \item The case when $H > \frac{1}{2}$:

\begin{equation*}
\begin{split}
\Pro( \Vert y \Vert_{\infty}\leq \epsilon) & \leq C_1\exp\left(-C_2 T^{2-2H}\epsilon^{-\gamma}\right)\\
&+\Pro\left( \Vert a \Vert_{\infty} \geq 2^{-2}T\epsilon^{\frac{\beta-2}{\beta}}\right),
\end{split}
\end{equation*}

where $\gamma= \frac{2-2\beta}{\beta}$.
\item The case when $H<\frac{1}{2}$:

\begin{equation*}
\begin{split}
\Pro(\Vert y \Vert_{\infty}\leq \epsilon) & \leq C_1\exp\left(-C_2 T\epsilon^{-\gamma}\right)\\
&+\Pro\left( \Vert a \Vert_{\infty} \geq 2^{-2}T\epsilon^{\frac{\beta-2}{\beta}}\right),
\end{split}
\end{equation*}
where $\gamma = \frac{2H-2\beta+1}{\beta}$. Notice that all constants are independent of $\epsilon$ and $T$.

\end{enumerate}
\end{thm}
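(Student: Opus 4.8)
The plan is to specialise the general bound of Theorem \ref{general-thm} to $p=2$ and feed into it the Gaussian concentration estimates of Theorem \ref{gaussian-concentration}. With $p=2$ the quantity $\mathbf{\vert X \vert}_2$ appearing in Theorem \ref{general-thm} is exactly the Euclidean norm $\mathbf{\vert Y \vert}_2$ of the increment vector $\mathbf{Y}=(Y_1,\dots,Y_N)$ with $Y_k=X_{k\delta}-X_{(k-1)\delta}$, which is the object controlled in Theorem \ref{gaussian-concentration}. Accordingly I would take the free parameter $I$ in the set $\mathcal{A}_2$ to be precisely the centering used there, namely $I=\left(\sum_{k=1}^{N}\E(X_{k\delta}-X_{(k-1)\delta})^2\right)^{1/2}$, so that the first probability in Theorem \ref{general-thm}, $\Pro(\vert \mathbf{\vert X \vert}_2-I\vert\ge 2^{-2}I)$, becomes exactly the concentration probability for which Theorem \ref{gaussian-concentration} provides an exponential bound.

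Next I would fix the mesh of the partition. Property (2) of Definition \ref{defn:Xalpha} gives $c\delta^{2\beta}\le \E Y_k^2\le C\delta^{2H}$ for each $k$, whence $cN\delta^{2\beta}\le I^2\le CN\delta^{2H}$ and in particular $I\ge \sqrt{c}\,N^{1/2}\delta^{\beta}$. The decisive choice is $\delta=\kappa\,\epsilon^{1/\beta}$ for a constant $\kappa$ large enough that $\sqrt{c}\,\delta^{\beta}\ge 4\epsilon$, together with $N=\lfloor T/\delta\rfloor$. With these choices the two admissibility conditions defining $\mathcal{A}_2$ hold for all small $\epsilon$: $N\delta\le T$ by construction, and $4N^{1/2}\le I\epsilon^{-1}$ follows from $I\ge\sqrt{c}\,N^{1/2}\delta^{\beta}$ combined with $\sqrt{c}\,\delta^{\beta}\ge 4\epsilon$. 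Observe that $N\approx T\delta^{-1}\approx T\epsilon^{-1/\beta}\to\infty$ as $\epsilon\to 0$, and it is precisely this growth of $N$ that drives the concentration exponent to infinity.

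For the first term I would then apply Theorem \ref{gaussian-concentration} with $h=2^{-2}I$ and use the lower bound $h^2\ge 2^{-4}cN\delta^{2\beta}$. When $H<\tfrac12$ the estimate is $C_1\exp(-C_2 h^2\delta^{-2H})$, and inserting $h^2\ge 2^{-4}cN\delta^{2\beta}$, $N\approx T\delta^{-1}$ and $\delta\approx\epsilon^{1/\beta}$ gives an exponent of order $T\,\delta^{2\beta-2H-1}=T\epsilon^{-\gamma}$ with $\gamma=\frac{2H-2\beta+1}{\beta}$. When $H>\tfrac12$ the bound is $C_1\exp(-C_2 N h^2(\delta N)^{-2H})$, and the same substitutions give an exponent of order $N^{2-2H}\delta^{2\beta-2H}\approx T^{2-2H}\delta^{2\beta-2}=T^{2-2H}\epsilon^{-\gamma}$ with $\gamma=\frac{2-2\beta}{\beta}$; these are exactly the two claimed rates, the case $H=\tfrac12$ being handled identically via the logarithmic estimate. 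The second term is obtained by inserting the chosen triple $(N,\delta,I)$ into the threshold $2^{-2}IN^{-1/2}\delta^{-1}$ of Theorem \ref{general-thm}; since this threshold diverges as $\epsilon\to 0$ (because $\beta<1$), that probability is a genuine large-deviation probability for $\Vert a\Vert_{\infty}$, yielding the second summand.

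The step I expect to be the main obstacle is the simultaneous bookkeeping of exponents: one must verify that the choice $\delta=\kappa\,\epsilon^{1/\beta}$ makes $(N,\delta,I)\in\mathcal{A}_2$ \emph{and} extracts the sharp power $\epsilon^{-\gamma}$ from the concentration exponent, with constants $\kappa,C_1,C_2$ chosen uniformly in both $\epsilon$ and $T$. In particular I would check that the integer rounding $N=\lfloor T/\delta\rfloor$ does not disturb the leading asymptotics (it does not, since $N\approx T/\delta$ for small $\epsilon$, so the factors $T^{2-2H}$ and $T$ emerge cleanly) and that the prefactors $\delta^{2\beta-2H-1}$ and $\delta^{2\beta-2}$ translate exactly into the stated exponents $\gamma$. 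Everything beyond this is direct substitution into the two inequalities already established in Theorems \ref{general-thm} and \ref{gaussian-concentration}.
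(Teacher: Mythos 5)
Your proposal is correct and is essentially the paper's own proof in expanded form: the paper likewise takes $p=2$, centers at $I$ (with the lower bound $I\geq c\sqrt{N}\delta^{\beta}$ from Definition \ref{defn:Xalpha}), selects $N\approx T/\delta$ and $\delta\approx c\epsilon^{1/\beta}$ to land in $\mathcal{A}_2$, and then invokes Theorem \ref{gaussian-concentration} with $h=2^{-2}I$ -- your exponent bookkeeping ($T\delta^{2\beta-2H-1}$ for $H<\tfrac12$, $T^{2-2H}\delta^{2\beta-2}$ for $H>\tfrac12$) fills in exactly what the paper's three-line proof leaves implicit, and your centering $I=\bigl(\sum_k \E(\Delta_k X)^2\bigr)^{1/2}$ is in fact the one consistent with the concentration inequality actually used (per the remark following Corollary \ref{cor:gaussian_general_con}), whereas the paper's proof text writes $I=\E\mathbf{\vert X \vert}_2$. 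The one caveat, inherited verbatim from the paper whose proof is silent on it, is the second summand: inserting the chosen triple into Theorem \ref{general-thm} yields the threshold $2^{-2}IN^{-1/2}\delta^{-1}\gtrsim c\,\epsilon^{\frac{\beta-1}{\beta}}$ (as in Subsection \ref{subsec:concentration}), which diverges as you say but does not literally match the stated $2^{-2}T\epsilon^{\frac{\beta-2}{\beta}}$, so your closing gloss is no less justified than the paper's own.
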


\begin{proof}
Now by choosing
$$
I = \E\mathbf{\vert X \vert }_2
$$
we have the crucial lower bound $I \geq \sqrt{N}\delta^\beta$. Hence by selecting $N
\approx \frac{T}{\delta}$ and $\delta \approx c\epsilon^{\frac{1}{\beta}}$ we have $(N,\delta,I)\in\mathcal{A}_2$. Now the result follows directly by applying the concentration inequalities given in Theorem
\ref{gaussian-concentration}.
\end{proof}

\begin{rmk} {\rm 
It was pointed out in Li and Shao \cite{l-s} (see also Lifshits \cite{Lifshits}) that to obtain upper bound for small ball
probability it is not sufficient to have lower bound for incremental variance in general. With our method, this is exactly the
crucial element to obtain the exponential upper bounds.

}
 
\end{rmk}

\begin{exm}{\rm (\textit{small deviation for fractional Brownian motion in $\Vert \, \Vert_\infty$-norm})
A fractional Brownian motion $B^H=\{B^H_t\}_{t \in [0,T]}$ with Hurst parameter $H \in (0,1) $ is a centered continuous Gaussian process with covariance function
$$R_H(s,t)=\frac{1}{2} \{ s^{2H} + t^{2H} - \vert t -s \vert^{2H} \}.$$

Let $X=B^H$ be a fractional Brownian motion. Then the process $X$ satisfies in assumption $(2)$ of Definition \ref{defn:Xalpha} with $\beta=H$. When $H > \frac{1}{2}$, a direct application of Theorem \ref{thm:gaussian}
item $(1)$ yields the upper bound 

\begin{equation*}
\Pro \left( \Vert B^H \Vert_\infty \le \epsilon \right) \le C_1 \exp\{ - C_2 T^{2-2H} \epsilon^{- \frac{2-2H}{H}} \}, 
\end{equation*}
and when $H < \frac{1}{2}$, using item $(2)$, we obtain the upper bound 

\begin{equation*}
\Pro \left( \Vert B^H \Vert_\infty \le \epsilon \right) \le C_1 \exp\{ - C_2 T \epsilon^{- \frac{1}{H}} \}.
\end{equation*}
It is well known that (Monrad and Rootzen \cite{m-r} and Shao \cite{shao}) for fractional Brownian motion with Hurst index $H\in(0,1)$, we have 
\begin{equation}\label{optimal-fbm}
\bar{C}_1 \exp \left\{ - \bar{C}_2 T \epsilon^{- \frac{1}{H}} \right\} \le \Pro(\sup_{t\in[0,T]} |B_t^H| \le \epsilon) \leq C_1\exp \left\{ -C_2T\epsilon^{-\frac{1}{H}}\right\},
\end{equation}
provided that $\epsilon \leq T^{\frac{1}{H}}$. Note that the first inequality in $(\ref{optimal-fbm})$ is a consequence of the only assumption $\sigma^2(t,s) \le \vert t-s\vert^{2H}$, see for example 
\cite[Section 4.1]{l-s} and reference therein. Hence, when Hurst parameter $H<\frac{1}{2}$, we obtain the optimal rate of the small deviation for fractional Brownian motion $B^H$ using our general approach. A decisive 
reason we obtain the optimal rate in the case $H<\frac{1}{2}$ is that the spectral density function of the stationary incremental process of fractional Brownian motion is bounded in this case. This is topic of 
Theorem \ref{thm:gauss_stat_inc}.

}
\end{exm}

\begin{exm}{\rm (\textit{small deviation for fractional Brownian motion in $\beta$- H\"older norm $\Vert \, \Vert_\beta$})
Let $B^H$ be a fractional Brownian motion with Hurst parameter $H<\frac{1}{2}$ and fix $\beta \in (0,H)$. Then using Theorem \ref{general-thm3} and Theorem \ref{gaussian-concentration} item $(2)$, one can readily 
obtain the upper bound $\Pro (\Vert B^H \Vert_\beta \le \epsilon) \le C_1 \exp \{ - C_2 N \}$. Note that in the set $\widetilde{A}_2$, we have restriction $\sqrt{N} \delta^\beta \le I \epsilon^{-1}$, and together 
with estimate $I \geq \sqrt{N} \delta^H$, we deduce that with selections $\delta \approx \epsilon^{\frac{1}{H-\beta}}$ and $N \approx \delta^{-1}$, we have  

\begin{equation*}
\Pro (\Vert B^H \Vert_\beta \le \epsilon) \le C_1 \exp \{ - C_2 \epsilon^{- \frac{1}{H-\beta}} \}, 
\end{equation*}
for some constants $C_1, C_2 >0$. We recall that (see for example \cite{k-l-s-holdernorm} or \cite[Theorem 4.7]{l-s}) for fractional Brownian motion $B^H$ with Hurst parameter $H \in (0,1)$, and $\beta < H$, there are two 
constants $0< C_1 \le C_2 < \infty$ such that for all $\epsilon \in (0,1)$:

\begin{equation*}
\exp \{ - C_2 \epsilon^{- \frac{1}{H-\beta}} \} \le \Pro (\Vert B^H \Vert_\beta \le \epsilon ) \le \exp\{ - C_1 \epsilon^{- \frac{1}{H-\beta}} \}.
\end{equation*}
Hence with our general approach one can even gain the optimal rate of small deviation in H\"older norm for fractional Brownian motion with Hurst parameter $H<\frac{1}{2}$.

 }
\end{exm}

As a simple corollary, we obtain the following upper bounds for small ball probabilities when we have more information on the process $a$. We consider only the case $H<\frac{1}{2}$ since in this range, we 
can attain the optimal rate with our general theorem. However, exponential upper bound can be given for values $H\geq\frac{1}{2}$ using Theorem \ref{thm:gaussian}, item $(1)$.

\begin{cor}\label{cor1}
Let $X\in\mathcal{X}^{(H,\beta)}$ with $H<\frac{1}{2}$. Assume that the process $a$ is almost surely bounded. Then there exist positive constants $C_1$ and $C_2$ such that for any
$\epsilon\in(0,1)$ and for any interval $[0,T]$, we have
$$
\Pro( \Vert y \Vert_{\infty}\leq \epsilon) \leq C_1\exp\left(-C_2 T\epsilon^{-\frac{2H-2\beta+1}{\beta}}\right).
$$
The constants are independent of $\epsilon$ and $T$.
\end{cor}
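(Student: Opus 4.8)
The plan is to specialize Theorem \ref{thm:gaussian}, item (2), and to show that the large-deviation term for $a$ either vanishes outright or can be absorbed into the constants. First I would invoke that theorem: since $X\in\mathcal{X}^{(H,\beta)}$ with $H<\tfrac12$, for every $\epsilon\in(0,1)$ and every $[0,T]$ we have
$$
\Pro(\Vert y\Vert_{\infty}\le\epsilon)\le C_1\exp\left(-C_2 T\epsilon^{-\gamma}\right)+\Pro\left(\Vert a\Vert_{\infty}\ge 2^{-2}T\epsilon^{\frac{\beta-2}{\beta}}\right),\qquad \gamma=\frac{2H-2\beta+1}{\beta}.
$$
The almost-sure boundedness of $a$ supplies a deterministic constant $M$ with $\Vert a\Vert_{\infty}\le M$ almost surely. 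Because $\beta<1<2$, the exponent $\frac{\beta-2}{\beta}$ is negative, so the threshold $2^{-2}T\epsilon^{\frac{\beta-2}{\beta}}$ diverges as $\epsilon\to0$. Hence whenever $2^{-2}T\epsilon^{\frac{\beta-2}{\beta}}>M$ the second probability is exactly $0$, and the asserted bound follows immediately.

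The only delicate point is uniformity over all pairs $(\epsilon,T)$ with $\epsilon\in(0,1)$, since the vanishing condition depends on $T$. I would therefore split on whether $2^{-2}T\epsilon^{\frac{\beta-2}{\beta}}>M$ or not. In the complementary regime $2^{-2}T\epsilon^{\frac{\beta-2}{\beta}}\le M$, equivalently $T\le 4M\epsilon^{\frac{2-\beta}{\beta}}$, I would control the quantity sitting in the exponent. Multiplying the constraint by $\epsilon^{-\gamma}$ and using the identity $\frac{2-\beta}{\beta}-\gamma=\frac{1+\beta-2H}{\beta}$ gives
$$
T\epsilon^{-\gamma}\le 4M\,\epsilon^{\frac{1+\beta-2H}{\beta}}\le 4M,
$$
where the last inequality uses $\epsilon<1$ together with $1+\beta-2H\ge 1-H>0$, so that the exponent $\frac{1+\beta-2H}{\beta}$ is strictly positive. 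Thus in this regime $T\epsilon^{-\gamma}$ is bounded by the absolute constant $4M$, whence $\exp(-C_2 T\epsilon^{-\gamma})\ge \exp(-4MC_2)$; enlarging $C_1$ beyond $\exp(4MC_2)$ and invoking the trivial estimate $\Pro(\Vert y\Vert_{\infty}\le\epsilon)\le 1$ closes this case. Combining the two regimes and relabelling $C_1,C_2$ yields the stated inequality, with constants independent of $\epsilon$ and $T$.

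The main obstacle, and essentially the only step requiring genuine care, is exactly this uniformity: the naive argument eliminates the $a$-term only for $\epsilon$ small relative to $T$, and the nontrivial observation is that precisely on the leftover regime the exponent $T\epsilon^{-\gamma}$ is controlled by a constant — a consequence of the sign of $\frac{1+\beta-2H}{\beta}$ — so the trivial probability bound suffices there. Everything else is a direct specialization of Theorem \ref{thm:gaussian}, and no further properties of $\mathcal{X}^{(H,\beta)}$ beyond those already used are needed.
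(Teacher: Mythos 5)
Your proof is correct and takes essentially the paper's route: the paper treats this as a direct specialization of Theorem \ref{thm:gaussian}, item (2), where the almost-sure bound $\Vert a\Vert_{\infty}\le M$ annihilates the large-deviation term once the threshold $2^{-2}T\epsilon^{\frac{\beta-2}{\beta}}$ exceeds $M$. Your extra two-regime step --- checking via $\frac{2-\beta}{\beta}-\gamma=\frac{1+\beta-2H}{\beta}>0$ that $T\epsilon^{-\gamma}\le 4M$ on the complementary set, so the trivial bound $\Pro(\Vert y\Vert_{\infty}\le\epsilon)\le 1$ can be absorbed by enlarging $C_1$ --- merely makes explicit the uniformity in $(\epsilon,T)$ that the paper leaves implicit in calling this a simple corollary.
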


Using a result by Marcus \& Sheep (see Lemma 3 in \cite{Lai-Stout}) we obtain the following exponential bound when the process $a$ is also Gaussian:

\begin{cor}
\label{cor2}
Let $X \in \mathcal{X}^{(H,\beta)}$ with $H<\frac{1}{2}$. Assume that $a$ is a Gaussian process such that $\Pro (\Vert a \Vert_{\infty} < \infty) >0$. Then there exist positive constants
$C_1$ and $C_2$ such that for any $\epsilon\in(0,1)$ and for any interval $[0,T]$, we have
$$
\Pro( \Vert y \Vert_{\infty} \leq \epsilon) \leq C_1\exp\left(-C_2 T\epsilon^{-\frac{2H-2\beta+1}{\beta}}\right).
$$
The constants are independent of $\epsilon$ and $T$.
\end{cor}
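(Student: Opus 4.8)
The plan is to start from the two-term upper bound furnished by Theorem~\ref{thm:gaussian}, item $(2)$, which applies precisely because $X\in\mathcal{X}^{(H,\beta)}$ with $H<\frac12$. That bound reads
$$
\Pro(\Vert y\Vert_\infty\le\epsilon)\le C_1\exp\!\left(-C_2 T\epsilon^{-\gamma}\right)+\Pro\!\left(\Vert a\Vert_\infty\ge 2^{-2}T\epsilon^{\frac{\beta-2}{\beta}}\right),\qquad \gamma=\frac{2H-2\beta+1}{\beta}.
$$
The first summand is already in the form claimed in the corollary, so the whole task reduces to controlling the large-deviation term $\Pro(\Vert a\Vert_\infty\ge d_\epsilon)$ with $d_\epsilon=2^{-2}T\epsilon^{(\beta-2)/\beta}$. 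Since $\beta<1<2$ the exponent $(\beta-2)/\beta$ is negative, so $d_\epsilon\to\infty$ as $\epsilon\to0$; this is exactly the regime in which a Gaussian tail estimate becomes effective, mirroring the proof of Corollary~\ref{cor1} but with the almost-sure boundedness of $a$ replaced by Gaussian concentration.

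Next I would turn the hypothesis $\Pro(\Vert a\Vert_\infty<\infty)>0$ into an almost-sure statement: by the zero--one law for Gaussian processes, $\Vert a\Vert_\infty<\infty$ almost surely. This is the entry point for the Marcus--Shepp estimate (Lemma~3 in \cite{Lai-Stout}), which guarantees $\E\exp(\lambda\Vert a\Vert_\infty^2)<\infty$ for some $\lambda>0$. A single application of Markov's inequality to the random variable $\exp(\lambda\Vert a\Vert_\infty^2)$ then yields the sub-Gaussian tail
$$
\Pro\!\left(\Vert a\Vert_\infty\ge x\right)\le \tilde C\exp(-\lambda x^2),\qquad x>0.
$$
Substituting $x=d_\epsilon$ bounds the second summand by $\tilde C\exp(-\lambda\, 2^{-4}T^2\epsilon^{2(\beta-2)/\beta})$.

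It then remains to show that this quantity is absorbed by the first summand, i.e. that the tail decays at least as fast as $\exp(-C_2T\epsilon^{-\gamma})$. The decisive algebraic check is the comparison of the two $\epsilon$-exponents: one has $2(\beta-2)/\beta\le-\gamma$ exactly when $2(2-\beta)/\beta\ge (2H-2\beta+1)/\beta$, which after clearing the positive factor $\beta$ reduces to $4-2\beta\ge 2H-2\beta+1$, that is to $3\ge 2H$ --- always true, and in fact strict, since $H<\frac12$. Hence $\epsilon^{2(\beta-2)/\beta}$ blows up strictly faster than $\epsilon^{-\gamma}$ as $\epsilon\to0$, so for small $\epsilon$ the tail term is dominated by $C_1\exp(-C_2T\epsilon^{-\gamma})$; combining the two summands and readjusting $C_1,C_2$ completes the argument.

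The main obstacle I anticipate is not this exponent comparison, which is elementary, but the bookkeeping needed to make the constants $C_1,C_2$ genuinely independent of $T$. The Marcus--Shepp constant $\lambda$ implicitly depends on $\sigma_T^2=\sup_{t\le T}\Var(a_t)$, and the tail bound produces a factor $T^2$ against the single $T$ in the target rate; reconciling these (for instance by splitting into the regimes $\epsilon$ small versus $\epsilon$ bounded away from $0$, where in the latter the trivial bound $\Pro(\cdot)\le1$ can be made to fit by enlarging $C_1$, and noting that the ratio of the two exponents equals $T\epsilon^{(2H-3)/\beta}\to\infty$) is the delicate part that must be handled with care to honour the claimed uniformity in $T$.
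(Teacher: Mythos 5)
Your proposal follows essentially the same route the paper intends: the paper gives no written-out proof of Corollary~\ref{cor2} but explicitly prefaces it by invoking the Marcus--Shepp result (Lemma~3 in \cite{Lai-Stout}) on top of Theorem~\ref{thm:gaussian}, item $(2)$, which is exactly your decomposition into the concentration term plus a sub-Gaussian tail for $\Vert a\Vert_\infty$ at level $d_\epsilon=2^{-2}T\epsilon^{(\beta-2)/\beta}$. Your exponent comparison $2(2-\beta)/\beta\ge(2H-2\beta+1)/\beta\iff 3\ge 2H$ is correct, and your closing remark about the $T$-dependence of the Fernique--Marcus--Shepp constant is a legitimate point of care that the paper itself glosses over.
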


\begin{exm}{ \rm
Let $X=B^{H_1}$ and the process $a=B^{H_2}$ be fractional Brownian motions with Hurst parameter $H_1 <\frac12$ and $H_2 \in (0,1)$, and \underline{not necessarily the same}. Now Corollary \ref{cor2} implies that there exist positive constants
$C_1$ and $C_2$ such that for any $\epsilon\in(0,1)$ and for any interval $[0,T]$, we have
$$
\Pro( \Vert y \Vert_{\infty} \leq \epsilon) \leq C_1\exp\left(-C_2 T\epsilon^{-\frac{1}{H_1}}\right).
$$
We remark that up to our knowledge this is a new result giving
upper bound for small ball probabilities of the processes of the form $$ y_t = B^{H_1}_t + \int_{0}^{t} B^{H_2}_s \ud s.$$ We stress that $X=B^{H_1}$ and $a=B^{H_2}$ are not 
\underline{necessarily independent}. Notice that when two processes $X$ and $a$ are
independent, the \textit{Anderson's inequality} \cite[Theorem 2.13]{l-s} implies an upper bound

$$ \Pro ( \Vert y \Vert_{\infty} < \epsilon ) \le \Pro ( \Vert X \Vert_{\infty} < \epsilon ).$$

We also note that one can give exponential upper bounds also in the case $H_1 \geq \frac{1}{2}$ with a slightly different rates.
}
\end{exm}

In general the small deviation is studied for Gaussian processes with stationary increments. We end this section by providing the following result which links the small deviation problem for Gaussian 
processes with stationary increments into the theory of \textit{\textit{}Toeplitz matrices}, i.e.  matrices for which elements are constant along diagonals. Let $X$ be a Gaussian process with stationary 
increment with incremental variance function $\sigma^2(t,s):=\sigma^2(t-s)= \E( X_t - X_s)^2$. Let $\delta$ be a fixed number. For time points $\{t_k = k\delta T : \, k=0,1,\ldots\}$, we consider the 
associated stationary Gaussian noise as a sequence $Z=\{ Z_k\}_{k=0}^\infty$ defined by $Z_k = \frac{1}{\sigma(\delta)}\left(X_{t_k}-X_{t_{k-1}}\right)$. We will make the following assumption on the 
stationary sequence $Z$.

\begin{Assumption}
\label{assu:si}
Let $Z=\{ Z_k \}_{k=0}^\infty$ be defined as above. We assume that the stationary sequence $Z$ has a spectral density $f \in L^{\infty}[-\pi,\pi]$. 
\end{Assumption}

\begin{exm}{ \rm
Let $X$ be a fractional Brownian motion with Hurst index $H\in(0,1)$. Then by self--similarity of $X$, the sequence $Z$ satisfies $Z_k \stackrel{\text{law}}{=} T^{H}\tilde{Z}_k$, where $\tilde{Z}_k$ is a 
standard fractional Gaussian noise, i.e. $\tilde{Z}_k = B^H_k - B^H_{k-1}$. It is well-known that the sequence $Z$ has a spectral density which is bounded for values $H\leq \frac12$ and unbounded for values 
$H>\frac{1}{2}$ (see for example \cite[Proposition 2.1]{beran}).
}
\end{exm}

\begin{thm}
\label{thm:gauss_stat_inc}
Let $X$ be a Gaussian process with stationary increments and incremental variance function $\sigma^2(t-s)= \E(X_t - X_s)^2$ such that Assumption \ref{assu:si} is satisfied. Furthermore, assume that there exists a constant $\Delta$ such that $\sigma$ is invertible on $(0,\Delta)$, and that $\frac{\sigma(T\delta)}{\sigma(\delta)} \leq C$ for $\delta\in(0,\Delta)$. Then there exists constants $C_1$ and $C_2$ such that 
$$
\Pro( \Vert X \Vert_{\infty} \leq \epsilon) \leq C_1\exp\left(-C_2 \frac{T}{\sigma^{-1}(4\epsilon)}\right),
$$
where $\sigma^{-1}$ denotes the inverse of $\sigma$.
\end{thm}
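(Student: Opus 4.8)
The plan is to specialize the general machinery to this Gaussian, stationary-increment setting, where the covariance matrix of the sampled increments becomes Toeplitz and its spectral norm can be read off from the spectral density. Since $a=0$, I would invoke Theorem \ref{general-thm} with $p=2$ (using the sharper constant $2^{-1}$ noted in the remark after its proof), which reduces the problem to estimating $\Pro(|\mathbf{\vert X\vert}_2 - I|\ge 2^{-1}I)$ over admissible triples $(N,\delta,I)\in\mathcal{A}_2$. I take the increments $Y_k = X_{t_k}-X_{t_{k-1}}$ over a uniform grid of mesh $s$ (the spacing $T\delta$ of Assumption \ref{assu:si}) and choose $I = \sqrt{\E\Vert\mathbf{Y}\Vert_2^2} = \sqrt{N}\,\sigma(s)$, so that the second-moment version of Corollary \ref{cor:gaussian_general_con} yields $\Pro(|\mathbf{\vert X\vert}_2 - I|\ge 2^{-1}I)\le C_1\exp(-I^2/(16\Vert\Gamma\Vert_2))$, where $\Gamma$ is the covariance matrix of $\mathbf{Y}$.

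The decisive step is to control $\Vert\Gamma\Vert_2$ uniformly in $N$. Because $X$ has stationary increments, $\Gamma_{jk}=\E[Y_jY_k]$ depends only on $|j-k|$, so $\Gamma$ is an $N\times N$ Toeplitz matrix. Factoring out the variance, $\Gamma=\sigma^2(s)\,\Gamma_Z$ where $\Gamma_Z$ is the Toeplitz covariance matrix of the normalized stationary noise $Z$ of Assumption \ref{assu:si}. For any unit vector $u$ one has $u^{\ast}\Gamma_Z u=\int_{-\pi}^{\pi}|\sum_k u_k e^{ik\lambda}|^2 f(\lambda)\,\ud\lambda\le 2\pi\Vert f\Vert_{\infty}$, since $\int_{-\pi}^{\pi}|\sum_k u_k e^{ik\lambda}|^2\,\ud\lambda=2\pi$ by orthogonality; hence $\Vert\Gamma_Z\Vert_2=\lambda_{\max}(\Gamma_Z)\le 2\pi\Vert f\Vert_{\infty}=:M$, which is finite by Assumption \ref{assu:si} and independent of $N$. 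Therefore $\Vert\Gamma\Vert_2\le M\sigma^2(s)$, and, using $I^2=N\sigma^2(s)$ together with the comparison $\sigma(T\delta)/\sigma(\delta)\le C$ (which keeps the ratio between the variance entering $I$ and the one normalizing $Z$ bounded), I obtain $I^2/\Vert\Gamma\Vert_2\ge c\,N$ for a constant $c>0$ independent of $N$, $\delta$ and $T$.

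It then remains to optimize over $\mathcal{A}_2$. With $I=\sqrt{N}\,\sigma(s)$ the constraint $4\sqrt{N}\le I\epsilon^{-1}$ is exactly $\sigma(s)\ge 4\epsilon$, i.e. $s\ge\sigma^{-1}(4\epsilon)$ (legitimate since $\sigma$ is invertible on $(0,\Delta)$), while $Ns\le T$ caps $N$ at $T/s$. To make the exponent $cN$ as large as possible I would take the smallest admissible mesh $s=\sigma^{-1}(4\epsilon)$ and $N\approx T/s=T/\sigma^{-1}(4\epsilon)$, which defines a valid triple in $\mathcal{A}_2$. Substituting gives $\Pro(\Vert X\Vert_{\infty}\le\epsilon)\le C_1\exp(-C_2\,T/\sigma^{-1}(4\epsilon))$, as claimed.

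The main obstacle is the second paragraph: securing the $N$-uniform bound $\Vert\Gamma\Vert_2\le 2\pi\Vert f\Vert_{\infty}\,\sigma^2(s)$ and matching the two normalizations. The Toeplitz estimate itself is the standard Fej\'er/quadratic-form argument, but care is needed because the noise $Z$ (hence $f$) is built from increments of mesh $T\delta$ yet divided by $\sigma(\delta)$; verifying that this yields a genuine stationary sequence with an $L^{\infty}$ spectral density whose supremum bound does not blow up as $\delta\to 0$ is precisely what Assumption \ref{assu:si} and the growth condition $\sigma(T\delta)/\sigma(\delta)\le C$ are designed to guarantee, and this is the step that must be handled with care.
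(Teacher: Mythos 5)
Your proposal is correct and takes essentially the same route as the paper: reduce via Theorem \ref{general-thm} (with $p=2$, $a=0$) to the Gaussian concentration inequality of Corollary \ref{cor:gaussian_general_con}, use stationarity of increments so that the covariance matrix of the sampled increments is Toeplitz with spectral norm controlled by the bounded spectral density of Assumption \ref{assu:si} (yielding $\Vert\Gamma\Vert_2\leq C\sigma^2(T\delta)$ via the ratio condition), and then pick the mesh $s=\sigma^{-1}(4\epsilon)$ and $N\approx T/s$ to get the exponent $T/\sigma^{-1}(4\epsilon)$. The only difference is cosmetic and in your favor: you establish the $N$-uniform eigenvalue bound directly through the quadratic-form identity $u^{*}\Gamma_Z u=\int_{-\pi}^{\pi}\bigl|\sum_k u_k e^{ik\lambda}\bigr|^2 f(\lambda)\,\ud\lambda\leq 2\pi\Vert f\Vert_{\infty}$, where the paper instead cites the asymptotic Toeplitz result $\lambda^n_{\max}\to\operatorname{ess\,sup}|f|$, and you explicitly flag the same normalization subtlety ($Z$ built from mesh-$T\delta$ increments but scaled by $\sigma(\delta)$) that the paper leaves implicit in its hypothesis $\sigma(T\delta)/\sigma(\delta)\leq C$.
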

\begin{proof}
Since $X$ has stationary increments, the covariance matrix of the sequence $\{ Z_k : \, k=1,\ldots,n\}$ is a Toeplitz matrix, say, $\Gamma_n$. Furthermore, the coefficients of this matrix are the Fourier 
coefficients of the spectral density function, say, $f$. Now it is well-known (see \cite[Theorem 1.1]{toeplitz}) that largest eigenvalue $\lambda^n_{\text{max}}$ of Toeplitz matrix $\Gamma_n$ as dimension 
$n$ of the matrix increases converges to the supremum of the spectral density function, i.e. 
$$
\lim_{n\to \infty}\lambda^n_{\text{max}} = \text{ess sup}|f(\lambda)| < \infty.
$$
Consequently, we obtain an upper bound $\Vert \Gamma\Vert_2 \leq \sigma^2(T\delta)$ from which the result follows immediately by applying Theorem \ref{general-thm} and Corollary \ref{cor:gaussian_general_con}.
\end{proof}

\begin{rmk}{\rm
Theorem \ref{thm:gauss_stat_inc} gives an easy to check condition to derive an upper bound for small deviations of Gaussian processes with stationary increments having bounded spectral density function. Also, 
it produces the optimal rate in the particular case of fractional Brownian motion with Hurst parameter $H\leq \frac12$. 
}\end{rmk}

\section{Application to stochastics integral representations}\label{application}

Given a process $X=\{X_t\}_{t \in [0,1]}$ with the natural filtration $\mathcal{F}=(\mathcal{F}_t)_{t \in [0,1]}$, it is an
interesting question that which random variables $\xi$, measurable with respect to the sigma-field $\mathcal{F}_1$, can be
represented as a stochastic integral
\begin{equation}\label{eq:integral_rep}
\xi = \int_0^1 \psi(s)\ud X_s
\end{equation}
for some adapted integrand $\psi(s)$. Especially, such questions are motivated by mathematical finance where the integral
representation $(\ref{eq:integral_rep})$ is interpreted as the \textit{hedging} of the \textit{contingent claim} $\xi$ by using financial strategy $\psi$. In order
to answer such problems, first one needs to define in which sense the stochastic integral exists, and therefore the definition of
the stochastic integral clearly depends on the integrator process $X$. This problem was studied for the standard Brownian motion by
Dudley \cite{d} who defined the integrals as the It\^o integral. Recently, the problem is explored to other integrator
processes taking into account the regularity of sample paths. In fact, the problem was considered for fractional
Brownian motion with the Hurst index $H>\frac{1}{2}$ by Mishura et al. \cite{m-s-v} where the authors proved that
the representation (\ref{eq:integral_rep}) holds if $\xi$ can be viewed as an end value of some $\vartheta$-H\"older process with any
$\vartheta>0$. Later on, their result was extended to general class of Gaussian processes by Viitasaari \cite{viitasaari}. The
results was further extended by Shevchenko and Viitasaari \cite{she-vii,she-vii2} to any integrator process $X$, not necessarily
Gaussian, which is H\"older continuous of order $\alpha>\frac{1}{2}$ and satisfies a small
ball estimate
\begin{equation}\label{smallball}
\Pro(\sup_{s\leq u \leq s+\Delta} |X_u- X_s| \leq \epsilon) \leq \text{exp}\left(-C\Delta\epsilon^{-\frac{1}{\alpha}}\right)
\end{equation}
for small enough $\Delta$.
 Note that the small ball estimate $(\ref{smallball})$ holds for many interesting Gaussian processes, in particular
for fractional Brownian motion. \\

Now, we apply our bounds for small deviations obtained in the Subsection \ref{gaussian} to integral representation problem. We restrict the analysis to Gaussian processes. However, it is straightforward to 
obtain modifications of the following results for general H\"older continuous processes by applying Theorem \ref{ex:bounded}. Let now $X$ be a Gaussian process. Now our results allows one to replace the small ball assumption
$(\ref{smallball})$ with more natural assumption; simply by assuming $X\in\mathcal{X}^{(H,\beta)}$ with some $H>\frac{1}{2}$
which in fact is drastically simple to check. This is the topic of the following theorem.

\begin{thm}
Let $X\in\mathcal{X}^{(H,\beta)}$ with $H>\frac{1}{2}$ and $\beta<\frac{3H}{H+2}$. Furthermore, assume that there exists an
$\mathcal{F}$-adapted process $\{z(t), t\ge 0\}$ having  H\"{o}lder continuous  paths of order
$\vartheta>\frac{2-2H}{1-\beta}\beta - 2H$ such that $z(1) = \xi$. Then there exists an $\mathcal{F}$-adapted process $\{\psi(t),
t\in[0,1]\}$  such that almost surely $\psi \in C[0,1)$ and
\begin{equation}\label{repres}
\int_0^1 \psi(s)\ud X_s = \xi \qquad \text{a.s.}
\end{equation}
where the stochastic integral is understood as a limit of Riemann-Stieltjes sums. 
\end{thm}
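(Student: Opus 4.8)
The plan is to reduce the statement to the pathwise representation result of Shevchenko and Viitasaari \cite{she-vii,she-vii2}, which furnishes a representation of the form \eqref{repres} for any integrator $X$ that is $\alpha$-H\"older continuous with some $\alpha>\frac12$ and obeys a local small ball estimate of the type \eqref{smallball}, provided the target process $z$ is sufficiently regular. Thus the whole argument amounts to verifying these two structural hypotheses for $X\in\mathcal{X}^{(H,\beta)}$, and then checking that the numerical conditions $\beta<\frac{3H}{H+2}$ and $\vartheta>\frac{2-2H}{1-\beta}\beta-2H$ are exactly what is needed to meet their regularity threshold on $z$.

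First I would establish the H\"older continuity. Since $X$ is centered Gaussian and property $(2)$ of Definition \ref{defn:Xalpha} gives $\sigma^2(s,t)=\E[(X_t-X_s)^2]\le C|t-s|^{2H}$, all even moments satisfy $\E|X_t-X_s|^{2m}\le c_m C^m|t-s|^{2Hm}$, so the Kolmogorov continuity theorem yields a modification with $\alpha$-H\"older sample paths for every $\alpha<H$. As $H>\frac12$ by assumption, one may fix $\alpha\in(\frac12,H)$; this is precisely what renders the pathwise Riemann--Stieltjes (Young) integral in \eqref{repres} meaningful, and it is also the reason the present theorem is confined to $H>\frac12$, even though our small ball rates are sharpest for $H\le\frac12$.

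Next I would produce the small ball estimate \eqref{smallball}. For a fixed base point $s$, the restarted process $u\mapsto X_{s+u}-X_s$ on $[0,\Delta]$ is again centered Gaussian with value $0$ at $u=0$ and lies in $\mathcal{X}^{(H,\beta)}$, because both conditions of Definition \ref{defn:Xalpha} depend only on the difference of the time arguments. Applying Theorem \ref{thm:gaussian}, item $(1)$, with $a\equiv 0$ and $T=\Delta$ gives
$$
\Pro\Big(\sup_{s\le u\le s+\Delta}|X_u-X_s|\le\epsilon\Big)\le C_1\exp\big(-C_2\,\Delta^{2-2H}\epsilon^{-\gamma}\big),\qquad \gamma=\tfrac{2-2\beta}{\beta}.
$$
Since $H>\frac12$ forces $\Delta^{2-2H}\ge \Delta$ for every $\Delta\in(0,1)$, this is bounded by $C_1\exp(-C_2\Delta\epsilon^{-\gamma})$, i.e.\ the estimate \eqref{smallball} holds with the exponent $\gamma$ in place of $1/\alpha$ (equivalently, with effective small ball exponent $\frac{\beta}{2-2\beta}$, which exceeds $\frac12$ thanks to $\beta\ge H>\frac12$).

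Finally, I would feed the H\"older exponent $\alpha\uparrow H$ from the first step and the small ball exponent $\gamma=\frac{2-2\beta}{\beta}$ from the second into the construction of \cite{she-vii,she-vii2}. The threshold that their construction imposes on the H\"older order of $z$, after substituting these two exponents, is expected to reduce to $\vartheta>\frac{2-2H}{1-\beta}\beta-2H=\frac{2(\beta-H)}{1-\beta}$; this lower bound is nonnegative and collapses to $\vartheta>0$ when $\beta=H$, thereby recovering the fractional Brownian case of Mishura et al.\ \cite{m-s-v}. The remaining condition $\beta<\frac{3H}{H+2}$ is what guarantees that this threshold falls below the admissible range, so that a genuine choice of $\vartheta$ (hence a target process $z$) exists. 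I expect the main obstacle to be exactly this bookkeeping: because the small ball rate $\gamma$ furnished by Theorem \ref{thm:gaussian} is suboptimal for $H>\frac12$, one must track it \emph{separately} from the H\"older exponent of $X$ and confirm that the resulting constraints coincide precisely with the quoted inequalities on $\beta$ and $\vartheta$. Once this matching is verified, the existence of an $\mathcal{F}$-adapted $\psi\in C[0,1)$ satisfying \eqref{repres}, with the integral understood as a limit of Riemann--Stieltjes sums, follows directly from \cite{she-vii,she-vii2}.
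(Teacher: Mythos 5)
Your overall strategy coincides with the paper's: H\"older continuity of order above $\frac12$ from the upper bound in Definition \ref{defn:Xalpha}(2) via Kolmogorov's criterion, a local small deviation estimate from Theorem \ref{thm:gaussian} with $a\equiv 0$, and insertion of both into the Shevchenko--Viitasaari construction. However, there is a genuine gap, and part of it is quantitatively fatal to the stated constants. Your reduction $\Delta^{2-2H}\epsilon^{-\gamma}\geq \Delta\,\epsilon^{-\gamma}$ is a correct inequality, but it weakens the small ball bound in exactly the wrong direction. The paper applies Theorem \ref{thm:gaussian} on each interval $[t_{n-1},t_{n-1}+\Delta_n/2)$ \emph{retaining} the factor $\Delta_n^{2-2H}$, so that with $\epsilon=\Delta_n^{\lambda-\hat\epsilon}$ the probability of the bad event is at most $C_1\exp\bigl(-C_2\Delta_n^{2-2H-(\lambda-\hat\epsilon)\frac{2-2\beta}{\beta}}\bigr)$, and Borel--Cantelli applies once this exponent of $\Delta_n$ is negative, i.e. once $\lambda>\frac{(1-H)\beta}{1-\beta}$ (the paper's displayed inequality ``$>0$'' is a sign slip; its subsequent conditions (1)--(3), which bound $\lambda=\min(\mu+\vartheta,\gamma(H-\epsilon),\kappa)$ from below by $\frac{1-H}{1-\beta}\beta$, show the intended direction). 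Your weakened estimate $\exp(-C\Delta\epsilon^{-\gamma})$ instead forces $\lambda>\frac{\beta}{2(1-\beta)}$, a strictly larger threshold when $H>\frac12$ since $\frac{(1-H)\beta}{1-\beta}=2(1-H)\cdot\frac{\beta}{2(1-\beta)}$ and $2(1-H)<1$. Propagated through the parameter bookkeeping, your version yields $\vartheta>\frac{\beta}{1-\beta}-2H$ rather than the stated $\vartheta>\frac{(2-2H)\beta}{1-\beta}-2H$, together with a stricter admissibility condition on $\beta$: you would prove a strictly weaker theorem.

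The second part of the gap is that the reduction to \cite{she-vii,she-vii2} cannot be black-boxed, and you never actually carry out the replacement argument. Their hypothesis \eqref{smallball} couples the small ball exponent $1/\alpha$ to the H\"older exponent $\alpha$ of the integrator; here the effective small ball exponent $\frac{\beta}{2-2\beta}$ never matches the H\"older exponent $H$ (matching would require $\beta=\frac{2H}{1+2H}<H$ for $H>\frac12$, contradicting $\beta\geq H$). You correctly flag that the two exponents must be tracked separately through their construction, but your assertion that the resulting threshold ``is expected to reduce to'' $\vartheta>\frac{2-2H}{1-\beta}\beta-2H$ is precisely the content requiring proof. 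The paper's proof consists almost entirely of this verification: choosing the sequence $\Delta_n$ and parameters $\mu,\gamma,\kappa$ and $\eta\in(1-H,\frac12)$ subject to six explicit inequalities, from which $\vartheta>2\frac{1-H}{1-\beta}\beta-2H$ emerges, and from which $\beta<\frac{3H}{H+2}$ follows by combining this lower bound on $\vartheta$ with $\vartheta<H$. Without performing that bookkeeping --- and using the unweakened bound with the $\Delta_n^{2-2H}$ factor --- your proposal announces the proof rather than giving it.
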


\begin{proof}
Let $\Delta_k$ be a sequence converging to zero such that $\sum_{k=1}^\infty \Delta_k = 1$. Consider the time points $t_n
= \sum_{k=1}^n \Delta_k$. Following arguments presented in \cite{she-vii2}, we obtain the result if we can choose the sequence
$\Delta_k$ and parameters $\mu$, $\gamma$, $\kappa$ and $\eta\in\left(1-H,\frac{1}{2}\right)$ in such way  that for small enough
$\epsilon$ and $\hat\epsilon$ the event
\begin{equation}
\label{event}
\left\{\sup_{t\in[t_{n-1},t_{n-1}+\Delta_n/2)} |X(t)- X(t_{n-1})| \leq \Delta_n^{\lambda-\hat\epsilon}\right\}
\end{equation}
happens only finite number of times, where $\lambda = \min(\mu+\vartheta,\gamma(H-\epsilon),\kappa)$. In addition, we have the following
three restrictions:
$$
\sum_{k=n}^{\infty}\Delta_k^{1-\eta-\mu}\rightarrow 0,  \quad
\sum_{k=n}^{\infty}\Delta_k^{2-\eta-\gamma}\rightarrow 0, \, \text{and} \,
\sum_{k=n}^{\infty}\Delta_k^{1+H-\eta-\mu-\kappa}\rightarrow 0.
$$

Moreover, we have to assume $\vartheta<H$. Now applying Theorem \ref{thm:gaussian} to the event (\ref{event}), we obtain
\begin{equation*}
\begin{split}
&\mathbb{P}\left(\sup_{t\in[t_{n-1},t_{n-1}+\Delta_n/2)} |X(t)- X(t_{n-1})| \leq \Delta_n^{\lambda-\hat\epsilon}\right) \\
& \qquad \leq C_1\exp\left(-C_2
\Delta_n^{2-2H-(\lambda-\hat\epsilon)\frac{2-2\beta}{\beta}}\right).
\end{split}
\end{equation*}
Therefore, using the Borel-Cantelli's Lemma, the event (\ref{event}) happens only finite number of times provided that $\Delta_n$
converges to zero fast enough and $2-2H-(\lambda-\hat\epsilon)\frac{2-2\beta}{\beta}
>0$. Combining with other three restrictions, we need to choose the parameters in such way that
\begin{itemize}
\item
$2-2H-(\lambda-\hat\epsilon)\frac{2-2\beta}{\beta} > 0$,
\item
$1-\eta-\mu>0$,
\item
$2-\eta-\kappa>0$,
\item
$1+H-\epsilon-\eta-\mu-\kappa > 0$,
\end{itemize}
and the result follows by choosing $\Delta_n$ such that it decays fast enough. First notice that by choosing
$\epsilon$ and $\hat\epsilon$ small enough, it is sufficient to have the following:\vspace*{0.5cm}\\
\begin{tabular}{llll}
& $(1)\quad \mu+\vartheta > \frac{1-H}{1-\beta}\beta$, & $(2) \quad \gamma H > \frac{1-H}{1-\beta}\beta$, & $(3) \quad \kappa > \frac{1-H}{1-\beta}\beta$, \\
& $(4)\quad 1-\eta-\mu>0$, & $(5)\quad 2-\eta-\kappa>0$, & $(6)\quad 1+H-\eta-\mu-\kappa > 0$.\vspace*{0.5cm}
\end{tabular}
Here (2) can be easily obtained by choosing $\gamma$ large enough. Next combining (1) and (4) we need
$\frac{1-H}{1-\beta}\beta -\vartheta < \mu < 1-\eta$ and together with $\eta\in\left(1-H,\frac{1}{2}\right)$ this is possible provided
that $1-H< 1+\vartheta-\frac{1-H}{1-\beta}\beta$ which leads to $\vartheta>\frac{1-H}{1-\beta}\beta - H$. Moreover, now we have to choose $\eta
\in\left(1-H, 1+\vartheta-\frac{1-H}{1-\beta}\beta\right)$. Furthermore, combining $\vartheta>\frac{1-H}{1-\beta}\beta - H$ with $\vartheta<H$ we end up to restriction $\beta < \frac{2H}{1+H}$.
Next combining restrictions (3) and (5) we obtain $\frac{1-H}{1-\beta}\beta < \kappa < 2-\eta$ which is again possible due to
previous choices. To conclude, we obtain (6) by choosing $\mu$ and $\kappa$ close their lower bounds provided that $\eta < 1+H + \vartheta - 2\frac{1-H}{1-\beta}\beta$. This is possible if $1-H
<  1+H + \vartheta - 2\frac{1-H}{1-\beta}\beta$ which leads to restriction $\vartheta>2\frac{1-H}{1-\beta}\beta-2H$ and together with $\vartheta<H$ this
yields the restriction $\beta < \frac{3H}{2+H}$.
\end{proof}

\begin{rmk}{\rm
Note that while we posed some restrictions for parameters $a$ and $\beta$, they are not very restrictive. For example, in
financial applications the random variable $\xi$ is usually some functional of the underlying
process $X$, and hence inherits the H\"older properties, i.e. $a$ can be taken arbitrary close to $H$. Similarly, for many cases
of interest the value $\beta$ is close to $H$ and certainly satisfies $\beta < \frac{3H}{H+2}$. }
\end{rmk}


\textbf{Acknowledgements}
The authors thank Giovanni Peccati for careful reading of the paper and many valuable comments. 
\bibliography{norris}

\bibliographystyle{plain}

\end{document}